\newtheorem{theorem}{Theorem}
\newtheorem{lemma}{Lemma}
\newtheorem{corollary}{Corollary}
\newtheorem{notation}{Notation}
\newtheorem{definition}{Definition}
\newtheorem{problem}{Problem}
\DeclareMathOperator{\argmin}{arg\,min}
\title{The Conditioning of Hybrid Variational Data Assimilation}
\begin{document}
\author[1]{Shaerdan Shataer\thanks{oz925466@reading.ac.uk}}
\author[1,2]{Amos S. Lawless}
\author[1,2]{Nancy K. Nichols}
\affil[1]{School of Mathematical, Physical and Computational Sciences, University of Reading, Reading, UK}
\affil[2]{National Centre for Earth Observation, Reading, UK}
\maketitle

\abstract{In variational assimilation, the most probable state of a dynamical system {under Gaussian assumptions for the prior and likelihood} can be found by solving a least-squares minimization problem . In recent years, we have seen the popularity of hybrid variational data assimilation methods for Numerical Weather Prediction. In these methods, the prior error covariance matrix is a weighted sum of a climatological part and a flow-dependent ensemble part, the latter being rank deficient. The nonlinear least squares problem of variational data assimilation is solved using iterative numerical methods, and the condition number of the Hessian is a good proxy for the convergence behavior of such methods. In this paper, we study the conditioning of the least squares problem in a hybrid four-dimensional variational data assimilation (Hybrid 4D-Var) scheme by establishing bounds on the condition number of the Hessian. In particular, we consider the effect of the ensemble component of the prior covariance on the conditioning of the system. Numerical experiments show that the bounds obtained can be useful in predicting the behavior of the true condition number and the convergence speed of an iterative algorithm
	}
	
	
	\maketitle
	

	\section{Introduction}\label{sec1}
{In weather forecasting, we use mathematical and numerical models to describe the system dynamics of the ocean and atmosphere. These models are highly nonlinear and often sensitive to noise in initial conditions. Because of the nonlinearity and instability, random perturbations in the initial data and errors in the model amplify rapidly through time, producing unreliable predictions \cite{nichols2010mathematical,gratton2007approximate}. In variational data assimilation, the goal is to find the maximum Bayesian a posteriori estimate of the system state from which to initialize the model. Major operational centres worldwide have adopted the four-dimensional variational assimilation scheme (4D-Var) for environmental forecasting in recent years. Similar applications arise in other fields such as physics, biology, and economics \cite{haben2011conditioning,bannister2017review,nichols2010mathematical}. }
 
	In 4D-Var, we aim to obtain an optimal initial state variable (conventionally called the 'analysis') by solving a nonlinear least squares  problem, in which we try to find the best fit between a set of observations over a time window and an \emph{a priori} estimate of the state at the start of the window, known as the background. {In the case where observations are only given at one time, the method becomes three-dimensional variational assimilation, or 3D-Var.} We assume that the background state and observations have Gaussian, unbiased errors, with covariance matrices $B$ and $R$ respectively. Traditional variational data assimilation methods have used a climatological estimate\cite{bannister2017review} for the  background error covariance matrix $B$, where flow-dependent ensemble information is not incorporated into the system\cite{smith2017estimating,smith2018treating}. Recent developments utilise a hybrid approach, in which an ensemble background error covariance matrix is estimated with ensemble members and then combined with the climatological part. This has a clear advantage of bringing in the variability of the system and updates the statistics in each prediction window, giving flow-dependent information that can improve the accuracy of predictions \cite{bannister2017review}. However, due to computational restrictions, the affordable number of ensemble members is normally small, which leads to a rank deficient ensemble background error covariance matrix.  
	The method of combining the ensemble parts with the conventional 4D-Var is called Hybrid 4D-Var. In this method,  $B$ is given by $B = (1-\beta)B_{0}+\beta P_{f}$. Here, $B_{0}, \ P_{f}$ are the climatological background error covariance and the ensemble error covariance matrix, and $(1-\beta),\ \beta$ are their weights, where $\beta$ is a scalar. For the details of the hybrid method, we refer readers to the review paper of Bannister \cite{bannister2017review}.
	
{In this paper, we are especially interested in establishing the relationship between the conditioning of Hybrid 4D-Var and the weight $\beta$ on $P_{f}$.  The 4D-Var problem is usually solved using iterative gradient methods, such as conjugate gradient or Quasi-Newton methods\cite{bannister2017review,nichols2010mathematical}.   The condition number of the Hessian can be used to estimate the number of iterations required for convergence\cite{golub2013matrix, tabeart2018conditioning}.  In addition, the condition number also reveals the sensitivity of the minimisation problem with respect to random noise\cite{golub2013matrix}. 
  Here we establish that, in Hybrid 4D-Var, a transition point exists where the condition number of $B$ sharply increases with the weight on $P_{f}$. Since $P_{f}$ is rank deficient \cite{bannister2017review,smith2017estimating,smith2018treating,nichols2010mathematical},
  adding the ensemble background error covariance matrix may cause difficulties for solving the nonlinear least-squares minimisation problem \cite{bannister2017review,smith2017estimating,smith2018treating,nichols2010mathematical}
 and an adequate preconditioning scheme becomes desirable. This is typically achieved through a Control Variable Transformation (CVT) \cite{bannister2017review,nichols2010mathematical}. CVT uses a decomposition of $B_{0}$ and $P_{f}$ to transform the state variable such that the conditioning of the Hessian matrix is improved \cite{bannister2017review,nichols2010mathematical}.   
  Implementation details of CVT are frequently described in previous works on 4D-Var\cite{tabeart2022new,haben2011conditioning,smith2017estimating,smith2018treating} and Hybrid 4D-Var\cite{bannister2017review}.    
  In terms of practical applications of Hybrid 4D-Var, we note that the nonlinear least-squares problem is often linearised and solved as a sequence of linear least-squares minimisations. This is known as the incremental 4D-Var method \cite{bannister2017review,haben2011conditioning,nichols2010mathematical} and is equivalent to an approximate Gauss-Newton method\cite{lawless2005investigation}.}
	
	In practice, it is useful to understand the contribution of each component of DA, in such a way that the impact on the conditioning can be predicted when these components change. This then motivates a comprehensive theory that can predict the conditioning while separating the contribution of each error component ($B_{0},P_{f},R$), and relating it to the parameters that characterize these components. We note that both preconditioned and unpreconditioned 4D-Var are implemented by major operational centres (such as the UK Met Office). 
 Such theories have previously been constructed for conventional 4D-Var. Haben et al (2011)\cite{haben2011conditioning} established a theory to estimate the conditioning of a preconditioned 4D-Var system. This work is later developed by Tabeart et al (2018, 2021) \cite{tabeart2018conditioning,tabeart2022new} for both unpreconditioned 3D-Var and preconditioned 4D-Var with Control Variable Transformation (CVT). In Tabeart et al's studies, a bound estimation is proved for the conditioning of the system, in which the contributions of the background error covariance matrix and observation error covariance matrix are separated. The impact of each component is then associated with its characterizing parameters. The research of Haben et al\cite{haben2011conditioning} and Tabeart et al\cite{tabeart2018conditioning,tabeart2022new} are tested and analysed using small scale examples. However, they are extrapolated to justify observations in large scale real-life applications. To give a few examples, {Mercier et al\cite{mercier2019speeding} used the analysis of Haben et al\cite{haben2011conditioning} to explain the convergence behaviour of a block Krylov method for a 3D-Var application\cite{mercier2019speeding}; Desroziers et al\cite{desroziers2012accelerating} cited the same result of Haben et al \cite{haben2011conditioning} to guide their design of a preconditioned Lanczos/Conjugate Gradient algorithm. Hatfield et al\cite{hatfield2020single} cited Haben et al's analysis \cite{haben2011conditioning} to explain the effect of an increasing model error on the convergence of an incremental 4D-Var; Aabaribaoune et al \cite{aabaribaoune2020estimation} used the result of Tabeart et al\cite{tabeart2018conditioning} to analyse the convergence speed of a BFGS algorithm for solving a 3D-Var system, in the application of ozone profiling}.  

	
	In this paper, we aim to extend previous studies of Haben et al\cite{haben2011conditioning} and Tabeart et al \cite{tabeart2018conditioning,tabeart2022new} to a hybrid system. In particular, we study the impact of $P_{f}$ on the condition number of the Hessian matrix, for both unpreconditioned cases and preconditioned cases with CVT. 
	
	We outline this paper as follows. In section 2, we briefly formulate the problem of Hybrid 4D-Var and introduce the Hessian matrix; in section 3, we establish the theory of conditioning for unpreconditioned Hybrid 4D-Var and preconditioned Hybrid 4D-Var with CVT; in sections 4 to 6, we provide multiple numerical experiments to illustrate the theories and analyse the conditioning; in section 7, we show how the behaviour of the condition number of the Hessian predicted by our theory is reflected in the convergence speed of a conjugate gradient algorithm. Section 8 gives a general summary of the results.  
	
	\section{Problem Formulation}\label{sec2}
	In this section we introduce the Hybrid 4D-Var method, the incremental method, the preconditioning technique of Control Variable Transformation, and the Hessian matrix associated with the unpreconditioned and preconditioned least-squares problems. 
	
	\subsection{A General Formulation of the Hybrid 4D-Var}
	A general formulation of the 4D-Var is given by Problem \ref{Problem 1}. 
	\begin{problem}\label{Problem 1}
		Solve for the optimal initial state $\boldsymbol{x_{a}}$ by solving a minimisation problem given by,\begin{align}\label{4d-var}
			\begin{cases}
				&\boldsymbol{x_{a}} = \argmin_{\boldsymbol{x_{0}}\in \mathbb{R}^{n}} 	\mathcal{J}(\boldsymbol{x_{0}}), \\ &\mathcal{J}(\boldsymbol{x_{0}}) :=\frac{1}{2}||\boldsymbol{x_{0}}-\boldsymbol{x^{b}}||_{B^{-1}}^{2} +\frac{1}{2} \sum_{i=0}^{N}||\boldsymbol{y_{i}}-\mathcal{H}_{i}(\boldsymbol{x_{i}})||^{2}_{R_{i}^{-1}},\\
				&\boldsymbol{x_{i}}\in \mathbb{R}^{n}, \ \boldsymbol{y_{i}}\in \mathbb{R}^{p}, \ B \in \mathbb{R}^{n,n}, \ R_{i} \in \mathbb{R}^{p,p}, \ \mathcal{H}_{i} : \mathbb{R}^{n}\to \mathbb{R}^{p},
			\end{cases}
		\end{align}
		where $\mathcal{H}_{i}$ is the nonlinear observational operator; $\boldsymbol{y_{i}}$ is the vector of observational measurements, taken at time $t_{i}$; $\boldsymbol{x_{i}}$ is the state variable at time $t_{i}$, given by,
		\begin{align}\label{forward 1}
			\boldsymbol{x_{i}}= \mathcal{M}_{i,i-1}(\boldsymbol{x_{i-1}})=\mathcal{M}_{i,0}(\boldsymbol{x_{0}}),
		\end{align}
		$\mathcal{M}_{i,i-1}: \mathbb{R}^{n}\to \mathbb{R}^{n}$ is the nonlinear model, and $\mathcal{M}_{i,0}=\mathcal{M}_{i,i-1}\cdot\mathcal{M}_{i-1,i-2}\cdots\mathcal{M}_{1,0}$ is a direct product (composition) of them. The vector $\boldsymbol{x^{b}}\in \mathbb{R}^{n}$ is the prior information given by the model at $t_{0}$, known as the background state.  { A simplified problem, given by 3D-Var, is a special case of the general 4D-Var problem where $N=0$, meaning there are only observations at time $t_{0}$.}
	\end{problem}

 {In Hybrid 4D-Var and 3D-Var, we follow the formulation given by (\ref{4d-var}), but replace the background error covariance matrix $B$ with $B = (1-\beta)B_{0}+\beta P_{f}$.}
We note that the problem 
 involves solving a nonlinear least squares problem, and {it is} difficult to implement directly when the problem is large scale. Instead, this is often replaced with a linearised incremental formulation, which we describe next.

	\subsection{Incremental Hybrid 4D-Var, Control Variable Transformation and The Hessian Matrix}
	{In practice, especially in NWP, Problem \ref{Problem 1} is often solved using the incremental method. 
 In incremental 4D-Var, the nonlinear least squares problem is replaced with a sequence of linear least squares problems with a cost function of}
	\begin{align}\label{incremental cost function}
		\mathcal{J}(\delta \boldsymbol{x}_{0}^{k}) = \frac{1}{2}||\delta \boldsymbol{x}_{0}^{k} - \delta \boldsymbol{x}_{b}^{k}||_{B^{-1}}^{2} + \frac{1}{2}\sum_{i=0}^{N}||\boldsymbol{d}_{i}^{k} - H_{i}\delta \boldsymbol{x}_{i}^{k}||_{R_{i}^{-1}},
	\end{align}
	where the vector $\boldsymbol{d}_{i}^{k} $ is known as the innovation, defined by $\boldsymbol{d}_{i}^{k} = y_{i}-\mathcal{H}_{i}(\boldsymbol{x}_{i}^{k})$; the linear operator $H_{i}$ is the Jacobian of $\mathcal{H}_{i}$, and the vector $\delta \boldsymbol{x}_{b}^{k}$ is the increment, given by $\delta \boldsymbol{x}_{b}^{k} = \boldsymbol{x}_{0}^{k} - \boldsymbol{x}_{b}^{k}$; the vector $\delta \boldsymbol{x}_{i}^{k}$ is computed from $\delta \boldsymbol{x}_{i}^{k} = M_{i,0}\delta \boldsymbol{x}_{0}^{k}$. The model operator $M_{i,0}$ is given by $M_{i,0} = \prod_{j=1}^{i}M_{j}$, where $M_{j}$ is the Jacobian of $\mathcal{M}_{j,j-1}$. At each outer iteration $k$ (the outer loop), we minimise (\ref{incremental cost function}) to solve for the increment, and then $\boldsymbol{x}_{0}^{k}$ is updated for the next iteration. 
	 
	We now introduce the Hessian matrix of the cost function $\mathcal{J}$ in Problem \ref{Problem 1}, which is given by\cite{tabeart2022new,haben2011conditioning},
	\begin{align*}
		{S_{4D}} = ((1-\beta)B_{0}+\beta P_{f})^{-1} + \sum_{i=0}^{N}({H}_{i}M_{i,0})^{T}R_{i}^{-1}{H}_{i}M_{i,0}.
	\end{align*}  
	
	The sum of the matrix product above can be written in a simple compact form. 
	\begin{notation}\label{notation 1}
		The matrix 
  $\hat{{H}}$ is the general observation operator, given by \begin{align*}
      \hat{{H}}:= \left[{H}_{0}^{T}, ({H}_{1}{M}_{1,0})^{T},\cdots,({H}_{N}{M}_{N,0})^{T}\right]^{T}\in \mathbb{R}^{p(N+1),n}
  \end{align*}, where $\hat{R}$ is a block diagonal matrix with its $i$th diagonal block  given by $R_{i}$.
	\end{notation}
	Following Notation \ref{notation 1}, we can rewrite the Hessian matrix as follows\cite{tabeart2022new,haben2011conditioning},
	\begin{align}\label{Hessian formula}
		{S_{4D}} = ((1-\beta)B_{0}+\beta P_{f})^{-1} + \hat{{H}^{T}}\hat{R}^{-1}\hat{{H}}.
	\end{align}  
In the case of Hybrid 3D-Var, we recall that it is a special case of 4D-Var with $N=0$ and its Hessian is given by
    \begin{align}\label{Hessian 3dvar}
        {S_{3D}} = ((1-\beta)B_{0}+\beta P_{f})^{-1} + {{{H}_{0}^{T}}{R_{0}}^{-1}{{H}_{0}}}.
    \end{align}
	
	In applications, preconditioning techniques are often applied to improve the conditioning of the system.
	Control Variable Transformation (CVT) is one of the popular preconditioning techniques. The detail of CVT is described by Nichols (2010) \cite{nichols2010mathematical}, Bannister (2017) \cite{bannister2017review} and Buehner (2005) \cite{buehner2005ensemble}. In this approach, we utilise factorizations of $B_{0}$ and $P_{f}$, given by
	\begin{align}
		B_{0} = U^{T}U, \ P_{f} = X_{f}^{T}X_{f}, \ \text{where} \ U= B_{0}^{1{/}2} \ \text{and} \\ \ X_{f} = \frac{1}{\sqrt{m-1}}\left[\boldsymbol{x}_{1}-\boldsymbol{\bar{x}},\boldsymbol{x}_{2}-\boldsymbol{\bar{x}},\cdots, \boldsymbol{x}_{m}-\boldsymbol{\bar{x}}\right], \nonumber
	\end{align}
 and $\boldsymbol{x}_{1},\boldsymbol{x}_{2},\cdots, \boldsymbol{x}_{m}$ are ensemble members, $m$ is the number of samples and $\bar{\boldsymbol{x}}$ is the ensemble mean. These matrices are then used to transform the state variable as follows \cite{bannister2017review},
	\begin{align}
		U_{h} \delta \boldsymbol{v} =  \delta \boldsymbol{x}, \ \text{where} \ U_{h} = \left[\sqrt{(1-\beta)}U \ \sqrt{\beta}X_{f}\right] \in \mathbb{R}^{n,n+m}.
	\end{align}
	Here $\delta \boldsymbol{x} \in \mathbb{R}^{n}$ is the increment of the state variable and $\delta \boldsymbol{v}$ is the increment of the control variable. Applying this transform to (\ref{incremental cost function}) leads to a new cost function that reads
	\begin{align}
		J(\delta \boldsymbol{v}_{0}^{k}) =  \frac{1}{2}||\delta \boldsymbol{v}_{0}^{k} - \delta \boldsymbol{v}_{b}^{k}||_{I_{n+m}} + \frac{1}{2}\sum_{i=0}^{N}||y_{i}-\mathcal{H}_{i}(\boldsymbol{x}^{k}) - H_{i}M_{i,0}U_{h}\delta \boldsymbol{v}_{i}^{k}||_{R_{i}^{-1}}. 
	\end{align} 
	A direct calculation then yields the Hessian of $J(\delta \boldsymbol{v}_{0}^{k})$ with respect to $\delta \boldsymbol{v}_{0}^{k}$ as
	\begin{align}
		{S_{P4D}} = I_{n+m} + U_{h}^{T}\hat{H}^{T}\hat{R}^{-1}\hat{H}U_{h}.
	\end{align}
	In the following sections we will demonstrate that CVT prevents the condition number from going to infinity when $\beta$ (the weight of the ensemble part) approaches 1. In addition, we also note that the adjoint of $U_{h}$ does not need to be computed explicitly. This is discussed in detail by Smith et al\cite{smith2017estimating} and Bannister\cite{bannister2017review}.
	
	\section{Theory of the conditioning of Hybrid 4D-Var}
	
	To better analyse the convergence of the nonlinear least squares problem of Hybrid 4D-Var, we aim to establish a set of theories that can predict changes in the conditioning of the system prompted by varying parameters (such as correlation length scale, error variance, etc). Typically, we are also keen to understand the impact of the rank deficient ensemble part on the conditioning. These motivate us to develop an estimation of the condition number that is informative of the actual conditioning of the system. In order to achieve such a goal, we use spectral theories to establish bounds for the condition number of the Hessian matrix. 
	
	We outline the structure of this section as follows. We start by introducing some pre-established results, then extend them to the Hybrid method. We will discuss the unpreconditioned cases and the preconditioned cases separately.  
	\vspace{-0.5 cm}
	\subsection{
 Eigenvalues and Conditioning}
	We begin with a brief review of previous work by Tabeart et al \cite{tabeart2018conditioning} and some fundamental eigenvalue inequalities\cite{wilkinson1971algebraic}. We will extend these results to Hybrid 4D-Var. In the scope of this paper we always assume that $B$ and $R$ are symmetric.
	
	\begin{notation}
		Let $\lambda_{k}(A)$ be the $k$th largest eigenvalue of a matrix $A\in \mathbb{R}^{n,n}$,  $\lambda_{1}(A), \lambda_{n}(A)$ be the largest and smallest eigenvalues of $A$, and $\kappa(A)$ be the condition number of $A$.
	\end{notation}
\begin{definition}
	For a symmetric positive definite matrix $A \in \mathbb{R}^{n,n}$, its condition number is defined by $\kappa({A}) =\lambda_{1}(A){/}\lambda_{n}(A) $.
	\end{definition}
	For the eigenvalues of the sum of two Hermitian matrices, H.Weyl (1912)\cite{weyl1912asymptotische} proved the following theorem: 
	\begin{theorem}\label{thm ineq sum}
		Let $A_{1},A_{2}$ be two symmetric matrices. Then the eigenvalues of $A = A_{1}+A_{2}$ satisfy the following:
		\begin{align}\label{ineq sum}
			\lambda_{k}(A_{1})+\lambda_{n}(A_{2})\leq\lambda_{k}(A)\leq \lambda_{k}(A_{1})+\lambda_{1}(A_{2}).
		\end{align} 
	\end{theorem}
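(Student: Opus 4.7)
The plan is to invoke the Courant--Fischer min-max characterisation of eigenvalues of symmetric matrices and then exploit the additivity of the quadratic form $x^{T}Ax = x^{T}A_{1}x + x^{T}A_{2}x$ together with the extremal Rayleigh quotient bounds $\lambda_{n}(A_{2})\le x^{T}A_{2}x \le \lambda_{1}(A_{2})$ which hold for every unit vector $x$. Concretely, for any symmetric $M\in\mathbb{R}^{n,n}$ I would use both equivalent forms
\begin{align*}
\lambda_{k}(M) \;=\; \min_{\substack{S\subseteq\mathbb{R}^{n}\\ \dim S = n-k+1}}\max_{\substack{x\in S\\ \|x\|=1}} x^{T}Mx \;=\; \max_{\substack{V\subseteq\mathbb{R}^{n}\\ \dim V = k}}\min_{\substack{x\in V\\ \|x\|=1}} x^{T}Mx.
\end{align*}

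For the upper bound in (\ref{ineq sum}), I would pick $S^{\ast}$ to be an $(n-k+1)$-dimensional subspace that attains the outer minimum in the first Courant--Fischer expression applied to $A_{1}$, so that $\max_{x\in S^{\ast},\,\|x\|=1} x^{T}A_{1}x = \lambda_{k}(A_{1})$. Plugging $S^{\ast}$ into the min-max formula for $\lambda_{k}(A)$ and splitting the Rayleigh quotient gives $\lambda_{k}(A)\le \max_{x\in S^{\ast},\,\|x\|=1}(x^{T}A_{1}x+x^{T}A_{2}x) \le \lambda_{k}(A_{1})+\lambda_{1}(A_{2})$. The lower bound follows from the dual max-min expression by the symmetric argument: take $V^{\ast}$ of dimension $k$ attaining $\min_{x\in V^{\ast},\,\|x\|=1}x^{T}A_{1}x = \lambda_{k}(A_{1})$, use this candidate subspace in the max-min formula for $\lambda_{k}(A)$, and apply $x^{T}A_{2}x\ge\lambda_{n}(A_{2})$.

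There is essentially no serious obstacle here; the only thing to be careful about is pairing each bound with the correct one of the two Courant--Fischer characterisations, so that the eigenvalue of $A_{1}$ of index $k$ appears in the same position on both sides. The Courant--Fischer identities themselves I would take as standard and cite, e.g. from Wilkinson's algebraic eigenvalue problem which is already referenced in the preceding paragraph. An alternative route via perturbation along the segment $A_{1}+tA_{2}$, $t\in[0,1]$, together with monotonicity of eigenvalues under positive/negative perturbations, would also yield the inequalities, but the min-max argument is shorter and self-contained, so that is what I would present.
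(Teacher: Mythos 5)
Your argument is correct: both applications of Courant--Fischer are paired with the right characterisation (min-max for the upper bound, max-min for the lower), the optimal subspaces $S^{\ast}$ and $V^{\ast}$ exist (take the spans of the appropriate eigenvectors of $A_{1}$), and the pointwise Rayleigh-quotient bounds $\lambda_{n}(A_{2})\le x^{T}A_{2}x\le\lambda_{1}(A_{2})$ close the argument. The paper itself gives no proof of this statement --- it is imported as Weyl's 1912 theorem with a citation --- so there is nothing to compare against; what you have written is the standard textbook proof of that classical result and is entirely adequate.
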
 
The inequality for products is given by Wang and Zhang as follows\cite{wang1992some},
	\begin{theorem}\label{thm ineq product}
		Let $A_{1},A_{2} \in R^{n,n}$ be positive semidefinite Hermitian matrices. Then
		\begin{align}\label{ineq product}
			\max\left[\lambda_{1}(A_{1})\lambda_{n}(A_{2}),\lambda_{1}(A_{n})\lambda_{1}(A_{2})\right]\leq	\lambda_{1}(A_{1}A_{2}) \leq \lambda_{1}(A_{1})\lambda_{1}(A_{2}).
		\end{align} 	
	\end{theorem}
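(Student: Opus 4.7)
The plan is to reduce the question about the (non-symmetric) product $A_1A_2$ to one about a symmetric positive semidefinite matrix, and then extract all three inequalities from the Rayleigh-quotient characterisation of its top eigenvalue. First, since $A_2$ is PSD it has a unique PSD square root $A_2^{1/2}$, and the standard fact that $XY$ and $YX$ share their non-zero spectra gives $\lambda_1(A_1A_2) = \lambda_1(A_2^{1/2}A_1A_2^{1/2})$. The right-hand matrix is manifestly symmetric PSD, so its top eigenvalue admits the representation
\begin{equation*}
\lambda_1(A_2^{1/2}A_1A_2^{1/2}) \;=\; \sup_{w \neq 0} \frac{(A_2^{1/2}w)^{T} A_1 (A_2^{1/2}w)}{w^{T} w}.
\end{equation*}

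Two of the three bounds fall out immediately. For the upper bound, replacing the numerator by $\lambda_1(A_1)\|A_2^{1/2}w\|^2$ reduces the supremum to $\lambda_1(A_1)\lambda_1(A_2)$. For the lower bound $\lambda_n(A_1)\lambda_1(A_2)$, I would test the quotient at $w$ equal to a unit top eigenvector of $A_2$ and apply the dual quadratic-form inequality $u^{T} A_1 u \geq \lambda_n(A_1)\|u\|^2$ to the numerator, so that $w^{T}w = 1$ and $\|A_2^{1/2}w\|^2 = \lambda_1(A_2)$ combine to give exactly the claimed bound.

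The remaining lower bound $\lambda_1(A_1)\lambda_n(A_2)$ is the delicate one, since it asks me to engineer a $w$ for which $A_2^{1/2}w$ aligns with the top eigenvector of $A_1$. When $A_2$ is strictly positive definite I would take $w = A_2^{-1/2}v$ with $v$ a unit top eigenvector of $A_1$: the numerator then equals $\lambda_1(A_1)$, while the denominator $v^{T} A_2^{-1} v$ is at most $\lambda_1(A_2^{-1}) = 1/\lambda_n(A_2)$, yielding the bound after rearrangement. If $A_2$ is singular then $\lambda_n(A_2)=0$ and the inequality becomes trivial because $A_1A_2$ has non-negative spectrum (as already noted by the reduction to $A_2^{1/2}A_1A_2^{1/2}$).

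The singular case of $A_2$ is the only real obstacle to a fully uniform presentation; the case split is mild but unavoidable, since $A_2^{1/2}$ cannot be freely inverted when its null space is non-trivial. Everything else reduces to routine Rayleigh-quotient manipulations together with the spectral identification $\lambda(A_1A_2)\setminus\{0\} = \lambda(A_2^{1/2}A_1A_2^{1/2})\setminus\{0\}$, and no ingredients beyond those already invoked in the paper (in particular, Weyl's inequalities from Theorem \ref{thm ineq sum}) are required.
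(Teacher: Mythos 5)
The paper does not actually prove this statement: it is imported verbatim (as Theorem~\ref{thm ineq product}) from the cited work of Wang and Zhang, so there is no in-paper proof to compare yours against. Your argument is nevertheless correct and self-contained. The reduction $\lambda_{1}(A_{1}A_{2})=\lambda_{1}(A_{2}^{1/2}A_{1}A_{2}^{1/2})$ via the fact that $XY$ and $YX$ share spectra is valid, and it simultaneously justifies your later remark that $\lambda_{1}(A_{1}A_{2})\geq 0$. The Rayleigh-quotient upper bound, the test vector $w$ equal to a top unit eigenvector of $A_{2}$ for the bound $\lambda_{n}(A_{1})\lambda_{1}(A_{2})$, and the choice $w=A_{2}^{-1/2}v$ (with $v$ a top eigenvector of $A_{1}$) for the bound $\lambda_{1}(A_{1})\lambda_{n}(A_{2})$ in the nonsingular case all check out, and the singular case of $A_{2}$ is correctly dispatched as trivial. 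Two small remarks: you have silently (and correctly) read the paper's $\lambda_{1}(A_{n})$ as the typo it is for $\lambda_{n}(A_{1})$; and your closing claim that Weyl's inequalities are among the ingredients is harmless but inaccurate --- your proof never uses Theorem~\ref{thm ineq sum}, only the spectral theorem and the Rayleigh quotient, which if anything makes the argument more elementary than the citation chain the paper relies on.
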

	
Applying these results to the Hessian matrix of 3D-Var, Tabeart et al\cite{tabeart2018conditioning} established that,  
	\begin{theorem}\label{thm jemima 1}
		Let ${S_{3D}} = B^{-1}+{H_{0}^{T}R_{0}^{-1}H_{0}}$ and {given} that $B,R$ are symmetric positive definite, the condition number of ${S_{3D}} $ is bounded as follows,
		\begin{align}\label{ineq 2}
  \begin{cases}
&\kappa({S_{3D}})\ge\max\left[\frac{\kappa({B})}{1+\lambda_{1}(B)\lambda_{1}(H_{0}^{T}R_{0}^{-1}H_{0})},\frac{1+\lambda_{1}(B)\lambda_{1}(H_{0}^{T}R_{0}^{-1}H_{0})}{\kappa({B})}\right]\\ &\kappa({S_{3D}}) \leq (1+\lambda_{n}(B)\lambda_{1}(H_{0}^{T}R_{0}^{-1}H_{0}))\kappa({B}).
\end{cases}
		\end{align}
	\end{theorem}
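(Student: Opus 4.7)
The plan is to bound the extreme eigenvalues of $S_{3D} = B^{-1} + H$ individually and then form their ratio, writing $H := H_0^T R_0^{-1} H_0$, which is symmetric positive semidefinite. The two ingredients are Weyl's inequality (Theorem~\ref{thm ineq sum}) applied to this sum and the fact that adding a positive semidefinite matrix only raises eigenvalues. Each of the three bounds in the statement arises from a different pairing of these two facts.

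For the upper bound, Weyl gives $\lambda_1(S_{3D}) \le \lambda_1(B^{-1}) + \lambda_1(H) = 1/\lambda_n(B) + \lambda_1(H)$, while $H \succeq 0$ yields $\lambda_n(S_{3D}) \ge \lambda_n(B^{-1}) = 1/\lambda_1(B)$. The ratio simplifies to $\kappa(B) + \lambda_1(B)\lambda_1(H)$, which can be rewritten as $(1+\lambda_n(B)\lambda_1(H))\kappa(B)$ by factoring out $\kappa(B)$.

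For the first lower bound I swap the roles: $\lambda_1(S_{3D}) \ge \lambda_1(B^{-1}) = 1/\lambda_n(B)$ (still using $H \succeq 0$), and Weyl now controls from above, $\lambda_n(S_{3D}) \le \lambda_n(B^{-1}) + \lambda_1(H) = 1/\lambda_1(B) + \lambda_1(H)$. Dividing and clearing denominators gives $\kappa(B)/(1+\lambda_1(B)\lambda_1(H))$. The second lower bound requires a sharper upper estimate on $\lambda_n(S_{3D})$, which I would obtain by exploiting the rank deficiency of $H$ that is typical in DA (where $H_0 \in \mathbb{R}^{p,n}$ with $p < n$): for any unit $v \in \ker H$, $v^T S_{3D} v = v^T B^{-1} v \le 1/\lambda_n(B)$, giving $\lambda_n(S_{3D}) \le 1/\lambda_n(B)$. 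Pairing this with the Weyl estimate $\lambda_1(S_{3D}) \ge \lambda_1(H) + \lambda_n(B^{-1}) = \lambda_1(H) + 1/\lambda_1(B)$ and simplifying yields $(1+\lambda_1(B)\lambda_1(H))/\kappa(B)$. Since the two lower bounds are mutual reciprocals, the $\max$ records whichever is nontrivial.

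The main obstacle is choosing the correct pairing of the Weyl direction with the use of $H \succeq 0$ in each of the three bounds, and, for the second lower bound, noticing that one must exploit a kernel vector of $H$ rather than Weyl alone — this is the step where the DA-specific structure (rank deficiency of the observation contribution) enters in an essential way, and without it the second lower bound can fail to be sharp.
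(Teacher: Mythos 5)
Your argument is correct, and it is essentially the standard proof of this result. Note first that the paper does not prove this theorem itself: it is quoted from Tabeart et al.\ (2018), so there is no in-paper proof to compare against; your reconstruction matches the approach used in that reference (Weyl's inequality, Theorem~\ref{thm ineq sum}, applied to the sum $B^{-1}+H_0^TR_0^{-1}H_0$, paired with the trivial eigenvalue shifts from positive semidefiniteness). All three of your pairings check out: the upper bound $\kappa(B)+\lambda_1(B)\lambda_1(H_0^TR_0^{-1}H_0)$ is indeed identical to $(1+\lambda_n(B)\lambda_1(H_0^TR_0^{-1}H_0))\kappa(B)$ since $\lambda_n(B)\kappa(B)=\lambda_1(B)$, and the two lower-bound terms come out as stated. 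Your one substantive observation deserves emphasis: the second lower bound $\bigl(1+\lambda_1(B)\lambda_1(H_0^TR_0^{-1}H_0)\bigr)/\kappa(B)$ is \emph{not} provable from the hypotheses as transcribed in the statement, and is in fact false without rank deficiency of the observation term (take $B=I$ and $H_0^TR_0^{-1}H_0=cI$ with $c>0$; then $\kappa(S_{3D})=1$ while the claimed lower bound is $1+c$). The missing hypothesis $p<n$, equivalently $\lambda_n(H_0^TR_0^{-1}H_0)=0$, is implicit throughout the paper (and explicit in Tabeart et al.), and your kernel-vector argument, or equivalently the Weyl estimate $\lambda_n(S_{3D})\le\lambda_1(B^{-1})+\lambda_n(H_0^TR_0^{-1}H_0)$, is exactly the right way to use it. So your proof is sound, and your flag about where the DA-specific structure enters is a genuine correction to the statement as printed here rather than a gap in your argument.
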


	\subsection{Conditioning of Unpreconditioned Hybrid 4D-Var}
	In Hybrid 4D-Var, the background error covariance matrix $B$ is replaced by a weighted sum of the static and ensemble background error covariance matrix, such that $B = (1-\beta)B_{0} + \beta P_{f}$. Although we can still use (\ref{ineq 2}) to estimate the condition number of the Hessian, we note that it does not have the capacity to separate the contribution of $P_{f}$ and predict the condition number as the weight of $P_{f}$ grows. This motivates us to establish theories that are specific to 
the hybrid case of 4D-Var.

	\begin{lemma}\label{lemma 1}
		
		Assume that $P_{f}$ is rank deficient and let $B = (1-\beta) B_{0} +\beta P_{f}$. Then the extreme eigenvalues of $B$ satisfy the following:
		\begin{align}\label{bound lambdaB}
			\begin{cases}
				\max \left[(1-\beta)\lambda_{1}(B_{0}), \ \beta\lambda_{1}(P_{f})+(1-\beta)\lambda_{n}(B_{0})\right]\leq\lambda_{1}(B)\leq (1-\beta)\lambda_{1}(B_{0})+\beta\lambda_{1}(P_{f}), \\
				(1-\beta)\lambda_{n}(B_{0})\leq\lambda_{n}(B)\leq \min\left[(1-\beta)\lambda_{1}(B_{0}), \ \beta\lambda_{1}(P_{f})+(1-\beta)\lambda_{n}(B_{0})\right].
			\end{cases}
		\end{align} 
	\end{lemma}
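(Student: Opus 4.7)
The plan is to treat $B = (1-\beta)B_{0} + \beta P_{f}$ as a sum of two symmetric positive semidefinite matrices and apply Weyl's inequality (Theorem~\ref{thm ineq sum}) repeatedly, with different assignments of which summand plays the role of $A_{1}$ versus $A_{2}$ and at different indices $k$. Rank deficiency of $P_{f}$ enters in two distinct places: via the identity $\lambda_{n}(P_{f}) = 0$ inside the Weyl estimates, and, separately, via a Rayleigh-quotient argument on the null space of $P_{f}$ for the one bound that Weyl cannot deliver.

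First I would take $A_{1} = (1-\beta)B_{0}$ and $A_{2} = \beta P_{f}$ in Theorem~\ref{thm ineq sum}. Since $P_{f}$ is positive semidefinite and rank deficient, $\lambda_{n}(\beta P_{f}) = 0$, while $\lambda_{1}(\beta P_{f}) = \beta\lambda_{1}(P_{f})$ and $\lambda_{i}((1-\beta)B_{0}) = (1-\beta)\lambda_{i}(B_{0})$. Applied at $k=1$ this yields the upper bound $\lambda_{1}(B) \leq (1-\beta)\lambda_{1}(B_{0}) + \beta\lambda_{1}(P_{f})$ together with the partial lower bound $\lambda_{1}(B) \geq (1-\beta)\lambda_{1}(B_{0})$. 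Applied at $k=n$ it yields the Weyl-type sandwich $(1-\beta)\lambda_{n}(B_{0}) \leq \lambda_{n}(B) \leq (1-\beta)\lambda_{n}(B_{0}) + \beta\lambda_{1}(P_{f})$. Swapping the roles of $A_{1}$ and $A_{2}$ and re-invoking Theorem~\ref{thm ineq sum} at $k=1$ produces the remaining lower bound $\lambda_{1}(B) \geq \beta\lambda_{1}(P_{f}) + (1-\beta)\lambda_{n}(B_{0})$. Taking the maximum of the two lower bounds reconstitutes the first line of (\ref{bound lambdaB}).

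The one bound not obtainable from Theorem~\ref{thm ineq sum} is the upper bound $\lambda_{n}(B) \leq (1-\beta)\lambda_{1}(B_{0})$. Here I would use the rank-deficiency hypothesis directly: $\ker(P_{f})$ contains a nonzero vector, which I normalise to a unit vector $v$. Since $P_{f}v = 0$,
\begin{equation*}
v^{T} B v \;=\; (1-\beta)\, v^{T} B_{0}\, v \;\leq\; (1-\beta)\lambda_{1}(B_{0}),
\end{equation*}
and by the Courant--Fischer characterisation $\lambda_{n}(B) = \min_{\|u\|=1} u^{T}Bu \leq v^{T}Bv$. Combining this with the Weyl-derived upper bound $(1-\beta)\lambda_{n}(B_{0}) + \beta\lambda_{1}(P_{f})$ produces the minimum on the right-hand side of the second line of (\ref{bound lambdaB}).

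None of the individual steps is substantial, and I do not anticipate a real obstacle; the proof is essentially a careful bookkeeping exercise matching each of the six inequalities in the lemma with the correct invocation of Theorem~\ref{thm ineq sum}. The only mildly non-mechanical point is recognising that the bound $(1-\beta)\lambda_{1}(B_{0})$ on $\lambda_{n}(B)$ cannot be extracted from Weyl's inequality alone and requires the null-space Rayleigh-quotient argument, which is precisely the place where the rank-deficiency hypothesis on $P_{f}$ does genuine work rather than merely contributing a vanishing $\lambda_{n}(P_{f})$.
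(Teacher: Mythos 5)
Your proof is correct and follows essentially the same route as the paper, whose entire argument is the one-line observation that the lemma follows from Theorem~\ref{thm ineq sum} together with $\lambda_{n}(P_{f})=0$; your write-up is simply the explicit bookkeeping of which assignment of $A_{1},A_{2}$ and which index $k$ yields each of the six inequalities. One correction, though: your claim that the bound $\lambda_{n}(B)\leq(1-\beta)\lambda_{1}(B_{0})$ cannot be extracted from Theorem~\ref{thm ineq sum} is false. Taking $A_{1}=\beta P_{f}$, $A_{2}=(1-\beta)B_{0}$ and $k=n$ in that theorem gives
\begin{equation*}
\lambda_{n}(B)\;\leq\;\lambda_{n}(\beta P_{f})+\lambda_{1}\bigl((1-\beta)B_{0}\bigr)\;=\;0+(1-\beta)\lambda_{1}(B_{0}),
\end{equation*}
which is exactly the missing bound, again using only $\lambda_{n}(P_{f})=0$. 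Your Rayleigh-quotient argument on $\ker(P_{f})$ is a valid substitute (it is in effect a direct proof of this instance of Weyl's inequality), so there is no gap, but the rank-deficiency hypothesis does no work here beyond forcing $\lambda_{n}(P_{f})=0$, contrary to your closing remark.
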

	\begin{proof}
		The conclusion follows from Theorem \ref{thm ineq sum} and that $\lambda_{n}(P_{f}) = 0$ (since $P_{f}$ is rank deficient).
	\end{proof}	
	
	\begin{lemma}\label{lemma kappaB}
		{Given} $B_{0},P_{f}$ are two symmetric matrices, let $B = (1-\beta)B_{0}+\beta P_{f}$, and assuming that $P_{f}$ is rank deficient, the condition number of $B$ is then bounded as follows,
		\begin{align}\label{bound kappaB}
			\max\left[\frac{1}{\kappa({B_{0}})}+\frac{\beta\lambda_{1}(P_{f})}{(1-\beta)\lambda_{1}(B_{0})},\left(\frac{1}{\kappa({B_{0}})}+\frac{\beta\lambda_{1}(P_{f})}{(1-\beta)\lambda_{1}(B_{0})}\right)^{-1}\right]	\leq\kappa({B})\leq \kappa({B_{0}})\left(1 + \frac{\beta\lambda_{1}(P_{f})}{(1-\beta)\lambda_{1}(B_{0})}\right).
		\end{align} 
	\end{lemma}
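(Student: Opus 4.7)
The plan is to derive all three bounds on $\kappa(B)=\lambda_{1}(B)/\lambda_{n}(B)$ by invoking the four extremal eigenvalue estimates from Lemma \ref{lemma 1} and then rewriting the resulting ratios in the $\kappa(B_{0})$-form advertised in the statement. Since the proof is essentially a bookkeeping exercise built on top of Lemma \ref{lemma 1}, I expect no deep obstacle; the only care needed is to pick, in each of the three cases, the right one of the several available bounds so that the expressions collapse to a clean ratio.

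For the \emph{upper bound}, I would combine the upper estimate $\lambda_{1}(B)\le (1-\beta)\lambda_{1}(B_{0})+\beta\lambda_{1}(P_{f})$ with the lower estimate $\lambda_{n}(B)\ge (1-\beta)\lambda_{n}(B_{0})$ from Lemma \ref{lemma 1}, giving
\begin{align*}
\kappa(B)\;\le\;\frac{(1-\beta)\lambda_{1}(B_{0})+\beta\lambda_{1}(P_{f})}{(1-\beta)\lambda_{n}(B_{0})}
\;=\;\kappa(B_{0})\Bigl(1+\tfrac{\beta\lambda_{1}(P_{f})}{(1-\beta)\lambda_{1}(B_{0})}\Bigr),
\end{align*}
after factoring $\lambda_{1}(B_{0})/\lambda_{n}(B_{0})=\kappa(B_{0})$ out of each term.

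For the \emph{lower bound}, my plan is to run two independent arguments and then take the maximum. In the first argument, I use the piece $\lambda_{1}(B)\ge \beta\lambda_{1}(P_{f})+(1-\beta)\lambda_{n}(B_{0})$ together with the piece $\lambda_{n}(B)\le (1-\beta)\lambda_{1}(B_{0})$ from Lemma \ref{lemma 1}. Dividing gives
\begin{align*}
\kappa(B)\;\ge\;\frac{\beta\lambda_{1}(P_{f})+(1-\beta)\lambda_{n}(B_{0})}{(1-\beta)\lambda_{1}(B_{0})}
\;=\;\frac{1}{\kappa(B_{0})}+\frac{\beta\lambda_{1}(P_{f})}{(1-\beta)\lambda_{1}(B_{0})},
\end{align*}
which is the first entry of the max. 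For the second argument, I swap the roles of the two pieces: use $\lambda_{1}(B)\ge (1-\beta)\lambda_{1}(B_{0})$ and $\lambda_{n}(B)\le \beta\lambda_{1}(P_{f})+(1-\beta)\lambda_{n}(B_{0})$, whose ratio is exactly the reciprocal of the quantity obtained in the first argument. Combining these two inequalities yields the stated max.

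The main (mild) obstacle is simply algebraic: making sure that in each case the chosen numerator/denominator pair from Lemma \ref{lemma 1} really does simplify to the clean form $\tfrac{1}{\kappa(B_{0})}+\tfrac{\beta\lambda_{1}(P_{f})}{(1-\beta)\lambda_{1}(B_{0})}$ (or its reciprocal, or times $\kappa(B_{0})$). This forces the specific pairings above, and is the only reason the proof is not a one-line corollary of Lemma \ref{lemma 1}. No further input beyond Weyl's inequality (already used to prove Lemma \ref{lemma 1}) and the rank deficiency $\lambda_{n}(P_{f})=0$ is required.
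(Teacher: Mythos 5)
Your proposal is correct and matches the paper's own proof essentially line for line: both derive the upper bound from $\lambda_{1}(B)\le(1-\beta)\lambda_{1}(B_{0})+\beta\lambda_{1}(P_{f})$ and $\lambda_{n}(B)\ge(1-\beta)\lambda_{n}(B_{0})$, and both obtain the two entries of the max in the lower bound by pairing the two lower estimates on $\lambda_{1}(B)$ with the two upper estimates on $\lambda_{n}(B)$ from Lemma~\ref{lemma 1}. No gaps; the only difference is cosmetic (the paper writes the upper bound as $\kappa(B_{0})+\beta\lambda_{1}(P_{f})/((1-\beta)\lambda_{n}(B_{0}))$ before restating it in the factored form you use).
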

	\begin{proof}
		By the definition of $\kappa({B})$ and Lemma \ref{lemma 1}, we find that,
		\begin{align*}
			\kappa({B}) = \frac{\lambda_{1}(B)}{\lambda_{n}(B)} \leq \frac{(1-\beta)\lambda_{1}(B_{0})+\beta\lambda_{1}(P_{f})}{(1-\beta)\lambda_{n}(B_{0})} = \kappa({B_{0}}) + \frac{\beta\lambda_{1}(P_{f})}{(1-\beta)\lambda_{n}(B_{0})},	
		\end{align*}
		and similarly,
		\begin{align*}
			\kappa({B}) \geq  \max\left[\frac{(1-\beta)\lambda_{n}(B_{0})+\beta\lambda_{1}(P_{f})}{(1-\beta)\lambda_{1}(B_{0})}, \frac{(1-\beta)\lambda_{1}(B_{0})}{(1-\beta)\lambda_{n}(B_{0})+\beta\lambda_{1}(P_{f})}\right] = \\ \max\left[\frac{1}{\kappa({B_{0}})}+\frac{\beta\lambda_{1}(P_{f})}{(1-\beta)\lambda_{1}(B_{0})},\left(\frac{1}{\kappa({B_{0}})}+\frac{\beta\lambda_{1}(P_{f})}{(1-\beta)\lambda_{1}(B_{0})}\right)^{-1}\right].
		\end{align*}
	\end{proof}
	We highlight that the condition number of $B$ diverges to infinity as $\beta \to 1$. This is reflected by the lemma above as the lower bound of $\kappa(B)$ goes to infinity.
	
 {We now use these results to obtain a bound on the condition number in Theorem \ref{thm unpreconditioned main}. We emphasize that the conclusion of these results cannot be directly derived from Theorem \ref{thm jemima 1} by simply replacing $B$ with $(1-\beta)B_{0} + \beta P_{f}$ in the upper and lower bounds. In fact, such an effort leads to a less sharp bound that does not provide analytical or theoretical insight on how the bound changes with $\beta$. Moreover, this approach does not reveal how the bound responds to changes in physical parameters related to $B_{0}$ and $P_{f}$. 
Using different strategies leads to a bound that clearly separates the contribution of each component ($B_{0},P_{f}$) and their weights, making it possible to analyze their impact on the condition number and their interaction as well.}

	\begin{notation}
		For simplicity of presentation, we introduce $\gamma_{z}, \ \Gamma_{z}$ to represent the lower and upper bound of any $z \in \mathbb{R}$.
	\end{notation}	
	\begin{theorem}\label{thm unpreconditioned main}
		Recall that $S_{4D} = B^{-1}+\hat{H}^{T}\hat{R}^{-1}\hat{H}$, $B = (1-\beta)B_{0}+\beta P_{f}$, and let,
		\begin{align*}
			&\Gamma_{\lambda_{n}(B)} = \min ( (1-\beta)\lambda_{n}(B_{0})+\beta\lambda_{1}(P_{f}),  (1-\beta)\lambda_{1}(B_{0}) ), \ \\  &\gamma_{\kappa({B})} = \max\left[\frac{1}{\kappa({B_{0}})}+\frac{\beta\lambda_{1}(P_{f})}{(1-\beta)\lambda_{1}(B_{0})},\left(\frac{1}{\kappa({B_{0}})}+\frac{\beta\lambda_{1}(P_{f})}{(1-\beta)\lambda_{1}(B_{0})}\right)^{-1}\right], \\ &\Gamma_{\kappa({B})}  = \kappa({B_{0}})\left(1 + \frac{\beta\lambda_{1}(P_{f})}{(1-\beta)\lambda_{1}(B_{0})}\right).
		\end{align*}  Given that $B,R$ are symmetric, the condition number of $S_{4D}$ is then bounded as follows,
		\begin{align}\label{ineq unpreconditioned main}
  \begin{cases}
   		\kappa({S_{4D}}) \geq \max	\left[	\frac{1	}{\Gamma_{\kappa({B})}} + (1-\beta)\lambda_{n}(B_{0})\lambda_{1}(\hat{H}^{T}\hat{R}^{-1}\hat{H}),\ \left(	\frac{1	}{\gamma_{\kappa({B})}} + \Gamma_{\lambda_{n}(B)}\lambda_{1}(\hat{H}^{T}\hat{R}^{-1}\hat{H})\right)^{-1}, \ 1 \right]   \\   		\kappa({S_{4D}}) \leq  \Gamma_{\kappa({B})}+\left((1-\beta)\lambda_{1}(B_{0})+\beta\lambda_{1}(P_{f})\right) \lambda_{1}(\hat{H}^{T}\hat{R}^{-1}\hat{H}).   
  \end{cases}
		\end{align}
	\end{theorem}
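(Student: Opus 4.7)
The plan is to write $\kappa(S_{4D}) = \lambda_1(S_{4D})/\lambda_n(S_{4D})$ and bound the numerator and denominator by applying Weyl's inequality (Theorem \ref{thm ineq sum}) to the splitting $S_{4D} = B^{-1} + \hat{H}^{T}\hat{R}^{-1}\hat{H}$. Since $\lambda_1(B^{-1}) = 1/\lambda_n(B)$ and $\lambda_n(B^{-1}) = 1/\lambda_1(B)$, the resulting Weyl bounds come out naturally in terms of $\lambda_1(B)$, $\lambda_n(B)$, $\kappa(B)$ and the extreme eigenvalues of $\hat{H}^{T}\hat{R}^{-1}\hat{H}$. I would then substitute Lemma \ref{lemma 1} and Lemma \ref{lemma kappaB} to replace those $B$-quantities by explicit expressions in $B_0$, $P_f$ and $\beta$.

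For the upper bound, I would combine the Weyl inequality $\lambda_1(S_{4D}) \leq 1/\lambda_n(B) + \lambda_1(\hat{H}^{T}\hat{R}^{-1}\hat{H})$ with the positive-semidefiniteness estimate $\lambda_n(S_{4D}) \geq \lambda_n(B^{-1}) = 1/\lambda_1(B)$. The ratio rearranges to $\kappa(S_{4D}) \leq \kappa(B) + \lambda_1(B)\lambda_1(\hat{H}^{T}\hat{R}^{-1}\hat{H})$, after which $\kappa(B) \leq \Gamma_{\kappa(B)}$ and $\lambda_1(B) \leq (1-\beta)\lambda_1(B_0) + \beta\lambda_1(P_f)$ give the stated bound directly.

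For the lower bound I will produce three candidates and take the maximum. First, using $\lambda_1(S_{4D}) \geq 1/\lambda_n(B)$ together with the Weyl upper bound $\lambda_n(S_{4D}) \leq 1/\lambda_1(B) + \lambda_1(\hat{H}^{T}\hat{R}^{-1}\hat{H})$, the quotient rewrites as $(1/\kappa(B) + \lambda_n(B)\lambda_1(\hat{H}^{T}\hat{R}^{-1}\hat{H}))^{-1}$; inserting $\kappa(B) \geq \gamma_{\kappa(B)}$ and $\lambda_n(B) \leq \Gamma_{\lambda_n(B)}$ yields the second term in the max. Second, using the swapped Weyl lower bound $\lambda_1(S_{4D}) \geq \lambda_1(\hat{H}^{T}\hat{R}^{-1}\hat{H}) + 1/\lambda_1(B)$ together with the sharper upper bound $\lambda_n(S_{4D}) \leq 1/\lambda_n(B)$ (which holds when $\hat{H}^{T}\hat{R}^{-1}\hat{H}$ is rank deficient so that $\lambda_n(\hat{H}^{T}\hat{R}^{-1}\hat{H}) = 0$), the quotient rewrites as $1/\kappa(B) + \lambda_n(B)\lambda_1(\hat{H}^{T}\hat{R}^{-1}\hat{H})$; applying $1/\kappa(B) \geq 1/\Gamma_{\kappa(B)}$ and $\lambda_n(B) \geq (1-\beta)\lambda_n(B_0)$ gives the first term. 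Third, $\kappa(S_{4D}) \geq 1$ is immediate.

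The main obstacle is bookkeeping rather than any deep idea. Weyl supplies several candidate bounds for $\lambda_1(S_{4D})$ and $\lambda_n(S_{4D})$, each pairing produces a differently shaped bound on $\kappa(S_{4D})$, and one must pick the pairings that, after substitution with Lemmas \ref{lemma 1} and \ref{lemma kappaB}, collapse to the exact forms in the statement. The two alternative lower bounds are sharpest in complementary regimes (roughly $\kappa(B)$ large versus small, controlled by $\beta$ and by $\lambda_1(\hat{H}^{T}\hat{R}^{-1}\hat{H})$), which is why the maximum of both together with the trivial $1$ is required. A further subtlety is that the step $\lambda_n(S_{4D}) \leq 1/\lambda_n(B)$ used in the second candidate implicitly appeals to rank deficiency of $\hat{H}$, a standard setting in data assimilation, and needs to be flagged explicitly.
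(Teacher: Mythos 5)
Your proposal is correct and follows essentially the same route as the paper: the paper reaches the identical intermediate inequalities $\kappa(S_{4D}) \le \kappa(B) + \lambda_1(B)\lambda_1(\hat{H}^{T}\hat{R}^{-1}\hat{H})$ and the two-term max lower bound by citing Theorem \ref{thm jemima 1} with $B=(1-\beta)B_0+\beta P_f$ (explicitly remarking that the same bounds follow from the eigenvalue inequalities, which is the derivation you carry out), and then substitutes Lemmas \ref{lemma 1} and \ref{lemma kappaB} exactly as you describe. Your observation that the candidate using $\lambda_n(S_{4D})\le 1/\lambda_n(B)$ implicitly requires $\lambda_n(\hat{H}^{T}\hat{R}^{-1}\hat{H})=0$ is a fair point the paper leaves tacit, inherited from the rank-deficient observation operator setting of Theorem \ref{thm jemima 1}.
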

	\begin{proof}
		In Theorem \ref{thm jemima 1}, replacing $H_{0}^{T}R^{-1}H_{0}$ with $\hat{H}\hat{R}^{-1}\hat{H}$, then the left hand side of Inequality (12) can be written 
		\begin{align*}
			\max\left[\frac{1}{\kappa({B})}+\lambda_{n}(B) \lambda_{1}(\hat{H}^{T}\hat{R}^{-1}\hat{H}),\ \left(\frac{1}{\kappa({B})}+\lambda_{n}(B) \lambda_{1}(\hat{H}^{T}\hat{R}^{-1}\hat{H})\right)^{-1} \right]\leq 		\kappa({S_{4D}}).
		\end{align*}
		
  For the upper bound, we note that the upper bound 
  can be written as follows,
		\begin{align*}
			 		\kappa({S_{4D}})\leq \kappa({B}) + \lambda_{1}(B)\lambda_{1}(\hat{H}^{T}\hat{R}^{-1}\hat{H}).
		\end{align*}
		Substituting $\kappa({B})$ and $\lambda_{1}(B)$ with their upper bounds, this then produces the upper bound on $ 		\kappa({S_{4D}})$.
	\end{proof}
 We note that the same bound can be derived by using eigenvalue 
 inequalities.	
 Crucially, the upper bound in Theorem \ref{thm unpreconditioned main} does not require any explicit computation of $S_{4D}$. We only need to compute the largest eigenvalues of $\lambda_{1}(B_{0}),\lambda_{1}(P_{f})$ and the condition number of $B_{0}$.

 In the case of a special observation operator we can simplify the Hessian in the 3D-Var case as follows.	 
	\begin{corollary} \label{coro 2}
  Assuming that each row of $H_{0}$ has one unit entry, with all other entries being zero, and that $R_{0}$ is diagonal such that $R_{0} = \sigma_{R_{0}}^{2}I_{p}$, where $I_{p} \in \mathbb{R}^{p,p}$ is the identity matrix, then the bounds in Theorem \ref{thm unpreconditioned main} in the case of 3D-Var simplify to:
		\begin{align}\label{ineq coro 2}
			\max \left[	\frac{1	}{\Gamma_{\kappa({B})}} + \frac{(1-\beta)\lambda_{n}(B_{0})}{\sigma_{R_{0}}^{2}},\ \left(	\frac{1	}{\gamma_{\kappa({B})}} + \frac{\Gamma_{\lambda_{n}(B)}}{\sigma_{R_{0}}^{2}}\right)^{-1}\right] \leq	 		\kappa({S_{3D}})  \leq  \Gamma_{\kappa({B})}+\frac{(1-\beta)\lambda_{1}(B_{0})+\beta\lambda_{1}(P_{f})}{\sigma_{R_{0}}^{2}}.
		\end{align}
	\end{corollary}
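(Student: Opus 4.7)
The plan is to specialize Theorem \ref{thm unpreconditioned main} to the 3D-Var setting and then replace the spectral factor $\lambda_{1}(\hat{H}^{T}\hat{R}^{-1}\hat{H})$ by the explicit value it takes under the structural assumptions on $H_{0}$ and $R_{0}$. Nothing new needs to be proved about $B$, because the quantities $\Gamma_{\lambda_{n}(B)}$, $\gamma_{\kappa(B)}$, and $\Gamma_{\kappa(B)}$ carry over verbatim from the general hybrid setup.

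First I would apply Theorem \ref{thm unpreconditioned main} with $N=0$. By Notation \ref{notation 1} this gives $\hat{H}=H_{0}$ and $\hat{R}=R_{0}$, and the Hessian (\ref{Hessian formula}) collapses to (\ref{Hessian 3dvar}). The bounds in (\ref{ineq unpreconditioned main}) therefore hold verbatim with $S_{4D}$ replaced by $S_{3D}$ and $\lambda_{1}(\hat{H}^{T}\hat{R}^{-1}\hat{H})$ replaced by $\lambda_{1}(H_{0}^{T}R_{0}^{-1}H_{0})$. The lower-bound term $1$ in the maximum can be dropped, since $\kappa(S_{3D})\geq 1$ holds automatically for any symmetric positive definite matrix; the remaining two candidates in the maximum are exactly the first two entries of (\ref{ineq coro 2}) once the observation factor is substituted.

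Second I would evaluate $\lambda_{1}(H_{0}^{T}R_{0}^{-1}H_{0})$ under the two hypotheses. Because $R_{0}=\sigma_{R_{0}}^{2}I_{p}$, we have $H_{0}^{T}R_{0}^{-1}H_{0}=\sigma_{R_{0}}^{-2}H_{0}^{T}H_{0}$, so $\lambda_{1}(H_{0}^{T}R_{0}^{-1}H_{0})=\sigma_{R_{0}}^{-2}\lambda_{1}(H_{0}^{T}H_{0})$. Since each row of $H_{0}$ contains a single unit entry and zeros elsewhere — the standard selection operator that picks state components at observation sites — the rows of $H_{0}$ are (distinct) standard basis vectors of $\mathbb{R}^{n}$, whence $H_{0}H_{0}^{T}=I_{p}$ and $H_{0}^{T}H_{0}$ is an orthogonal projection of rank $p$. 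Thus $\lambda_{1}(H_{0}^{T}H_{0})=1$, giving $\lambda_{1}(H_{0}^{T}R_{0}^{-1}H_{0})=1/\sigma_{R_{0}}^{2}$. Substituting this value into the specialized bounds from the first step produces (\ref{ineq coro 2}) immediately.

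The only subtle point — and the one I would be careful to flag — is the step $\lambda_{1}(H_{0}^{T}H_{0})=1$, which requires that the rows of $H_{0}$ be distinct rather than just having one unit entry each. Without this distinctness, $\lambda_{1}(H_{0}^{T}H_{0})$ would be the maximum number of times any single state variable is observed, and the clean factor $1/\sigma_{R_{0}}^{2}$ in (\ref{ineq coro 2}) would pick up an extra multiplicative integer. This distinctness convention is standard in the data-assimilation literature on which the paper draws (see Haben et al.\ \cite{haben2011conditioning} and Tabeart et al.\ \cite{tabeart2018conditioning}), so I would simply note it as the implicit reading of the hypothesis; all other steps of the proof are direct substitution into Theorem \ref{thm unpreconditioned main}.
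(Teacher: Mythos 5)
Your proof is correct and follows essentially the same route as the paper: specialize Theorem~\ref{thm unpreconditioned main} to the 3D-Var case, observe that $\lambda_{1}(H_{0}^{T}R_{0}^{-1}H_{0})=\sigma_{R_{0}}^{-2}$ under the stated assumptions on $H_{0}$ and $R_{0}$, and substitute into the bounds. Your observation that the rows of $H_{0}$ must be \emph{distinct} standard basis vectors for $\lambda_{1}(H_{0}^{T}H_{0})=1$ to hold is a genuine point that the paper's one-line proof leaves implicit, and is worth retaining.
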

	\begin{proof}
		Given that $R_{0} = \sigma_{R_{0}}^{2}I_{p}$, and with the assumption on $H$, the largest eigenvalue of ${H}_{0}^{T}{R}_{0}^{-1}{H}_{0}$ is always $\lambda_{1}({H}_{0}^{T}{R}_{0}^{-1}{H}_{0}) = \sigma_{R_{0}}^{-2}$. Replacing relevant terms in Bounds (\ref{ineq unpreconditioned main}), we then reach the conclusion.
	\end{proof}
	
We emphasise that the upper bound on $\kappa(S_{4D})$ diverges to infinity as $\beta \to 1$. In such a case, the background error covariance matrix is dominated by $P_{f}$. However, the theory does not predict the same 
 behaviour for the lower bound. 
 In terms of 
 of the observation part, we observe that the number $p$ of observation points does not alter the bounds; thus the theory can not predict the behaviour of $\kappa({S_{4D}})$ when $p$ varies.  
	
	Although unpreconditioned 4D-Var is still in use for real-life applications\cite{tabeart2018conditioning}, it is now a common practice to implement CVT for Hybrid 4D-Var. In the next section, we focus on developing similar theories for such cases.

	\subsection{Conditioning of Preconditioned Hybrid 4D-Var Method with CVT}\label{CVT theory}
	The preconditioning of Hybrid 4D-Var is broadly adopted in major operational centres, and the impact of different error components on the conditioning is important to determine the convergence of numerical schemes for incremental Hybrid 4D-Var, or more generally, for solving the nonlinear least squares problem in Hybrid 4D-Var. In this section we prove two versions of the bounds for $ 		\kappa({S_{4D}})$ with different strategies. Firstly we remind readers some useful notation as follows: let $K := \hat{H}^{T}\hat{R}\hat{H}$, $U := B_{0}^{{1}{/}{2}}$ and $X_{f} := \frac{1}{\sqrt{m-1}}\left[\boldsymbol{x}_{1}-\bar{\boldsymbol{x}},\boldsymbol{x}_{2}-\bar{\boldsymbol{x}},\cdots, \boldsymbol{x}_{m}-\bar{\boldsymbol{x}}\right]$,  where $\boldsymbol{x}_{1},\boldsymbol{x}_{2},\cdots,\boldsymbol{x}_{m}$ is the set of ensemble members and $\bar{\boldsymbol{x}}$ is the ensemble mean. 

 {We recall that the motivation behind our approach to deriving the bound is to separate the contribution of each component in the Hessian, particularly the components of $B_{0}$ and $P_{f}$ and their weights. We are especially keen to find out how the ratio of $(1-\beta)||B_{0}||_{2}{/}(\beta||P_{f}||_{2})$ influences the bound, as this ratio is linked to the balancing of the climatological part and the ensemble part. Driven by this motivation, we discovered a decomposition of the matrix product $U_{h}^{T}\hat{H}^{T}\hat{R}^{-1}\hat{H}U_{h}$ that yields a useful max function which is directly related to this ratio. The detail is given by the proof of Theorem \ref{thm preconditioned main}.}
	\begin{theorem}\label{thm preconditioned main}
  The condition number of the Hessian matrix of the Hybrid 4D-Var method satisfies
		\begin{align}\label{ineq preconditioned main}
			\begin{cases}
			{\kappa(S_{P4D})} \leq 1 +\sqrt{(\beta-\beta^{2})\lambda_{1}(B_{0})\lambda_{1}(P_{f})\lambda_{1}(K^{2})} +  \max \left[(1-\beta)\lambda_{1}(B_{0})\lambda_{1}(K), \  \beta\lambda_{1}(P_{f})\lambda_{1}(K)\right],\\
			{\kappa(S_{P4D})} \geq 1 + \max\left[  (1-\beta)\lambda_{1}(K)\lambda_{n}(B_{0}), \  \sqrt{(\beta - \beta^{2})\lambda_{n}(B_{0})\lambda_{1}(P_{f})\lambda_{1}(K^{2})}\right]
				\end{cases}
		\end{align}
	\end{theorem}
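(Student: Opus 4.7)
The plan is to reduce the condition number of $S_{P4D}$ to a single eigenvalue problem and then exploit the explicit two-by-two block structure of $U_h^T K U_h$.

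\emph{Reduction.} Since $S_{P4D} = I_{n+m} + U_h^T K U_h$ and $U_h^T K U_h$ is symmetric positive semidefinite, the eigenvalues of $S_{P4D}$ are $1 + \mu_i$ with $\mu_i \geq 0$. Because $U_h \in \mathbb{R}^{n\times (n+m)}$ has more columns than rows, $U_h^T K U_h$ has rank at most $n$, so at least $m$ of its eigenvalues vanish. Thus $\lambda_{n+m}(S_{P4D}) = 1$ and $\kappa(S_{P4D}) = 1 + \lambda_1(U_h^T K U_h)$. The theorem therefore reduces to two-sided bounds on $\lambda_1(U_h^T K U_h)$.

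\emph{The decomposition.} The hint in the text points to the explicit block form
\begin{equation*}
U_h^T K U_h = \begin{pmatrix} (1-\beta)\,U^T K U & \sqrt{(1-\beta)\beta}\,U^T K X_f \\ \sqrt{(1-\beta)\beta}\,X_f^T K U & \beta\, X_f^T K X_f \end{pmatrix},
\end{equation*}
which separates the climatological contribution, the ensemble contribution, and the coupling. The coupling weight $\sqrt{(1-\beta)\beta}=\sqrt{\beta-\beta^2}$ is exactly the factor appearing in the square-root cross term of the theorem.

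\emph{Upper bound.} For a unit vector $u=(u_1^T,u_2^T)^T$ I would evaluate the Rayleigh quotient as $u^T U_h^T K U_h u = \|\sqrt{1-\beta}\,K^{1/2}Uu_1 + \sqrt{\beta}\,K^{1/2}X_f u_2\|^2$ and apply the triangle inequality. The two squared norms are bounded via Theorem \ref{thm ineq product}: $\|K^{1/2}Uu_1\|^2 = u_1^T U^T K U u_1 \leq \lambda_1(U^T K U)\|u_1\|^2 \leq \lambda_1(B_0)\lambda_1(K)\|u_1\|^2$, using $\lambda_1(U^T K U)=\lambda_1(B_0 K)$, and analogously for $X_f$. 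Combined with $\|u_1\|^2+\|u_2\|^2=1$, these yield the maximum $\max[(1-\beta)\lambda_1(B_0)\lambda_1(K),\,\beta\lambda_1(P_f)\lambda_1(K)]$, which is where the max function naturally arises. The cross term, after AM-GM on $2\|u_1\|\|u_2\|\leq 1$, contributes exactly $\sqrt{(\beta-\beta^2)\lambda_1(B_0)\lambda_1(P_f)\lambda_1(K^2)}$.

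\emph{Lower bound.} For the first branch of the max, restrict the Rayleigh quotient to $u=(u_1^T,0)^T$, obtaining $(1-\beta)\lambda_1(U^T K U) = (1-\beta)\lambda_1(B_0 K) \geq (1-\beta)\lambda_n(B_0)\lambda_1(K)$ by Theorem \ref{thm ineq product}. For the second branch I would use the identity $\lambda_1(M) = \sup_{x,y}|x^T M y|/(\|x\|\|y\|)$ valid for symmetric positive semidefinite $M$; choosing $x$ supported in the first block and $y$ supported in the second isolates the off-diagonal block and produces $\lambda_1(U_h^T K U_h) \geq \sqrt{(1-\beta)\beta}\,\sigma_1(U^T K X_f) = \sqrt{(\beta-\beta^2)\lambda_1(B_0 K P_f K)}$. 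The main obstacle is to match the stated form $\lambda_n(B_0)\lambda_1(P_f)\lambda_1(K^2)$ for this last factor: grouping $U^T K P_f K U = U^T(K P_f K)U$ and applying the lower side of Theorem \ref{thm ineq product} in the form $\lambda_1(A^T C A)\geq \lambda_n(A^T A)\lambda_1(C)$ peels off the $\lambda_n(B_0)$, after which one uses the rearrangement $\lambda_1(K P_f K) = \lambda_1(P_f K^2)$ and a careful reapplication of Theorem \ref{thm ineq product} to retain the top eigenvalue of $K^2$ in the presence of the rank-deficient $P_f$. Taking the larger of the two lower bounds and adding $1$ then completes the proof.
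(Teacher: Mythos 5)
Your reduction, block decomposition, upper bound, and the first branch of the lower bound all track the paper's own argument: the paper writes $U_h^T K U_h = A_1 + A_2$ with $A_1$ the block-diagonal part and $A_2$ the off-diagonal part, applies Theorem \ref{thm ineq sum} to the sum and Theorem \ref{thm ineq product} to the resulting products, and your Rayleigh-quotient/triangle-inequality computation is an equivalent (if anything, cleaner) route to the same two terms $\lambda_1(A_1)$ and $\lambda_1(A_2)$. The genuine gap is precisely where you flag ``the main obstacle,'' and it is not closable by the route you sketch. After peeling off $\lambda_n(B_0)$ you still need $\lambda_1(P_f K^2)\ge \lambda_1(P_f)\lambda_1(K^2)$, but the lower branch of Theorem \ref{thm ineq product} gives only
\begin{align*}
\lambda_1(P_fK^2)\ \ge\ \max\left[\lambda_1(P_f)\lambda_n(K^2),\ \lambda_n(P_f)\lambda_1(K^2)\right],
\end{align*}
and both terms vanish because $P_f$ and $K$ are each rank deficient. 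No ``careful reapplication'' of Theorem \ref{thm ineq product} recovers the product of the two largest eigenvalues: the inequality $\lambda_1(P_fK^2)\ge\lambda_1(P_f)\lambda_1(K^2)$ is false in general (take $P_f$ and $K$ with orthogonal ranges, so that $P_fK^2=0$ while the right-hand side is positive), and since the matching upper bound is $\lambda_1(P_fK^2)\le\lambda_1(P_f)\lambda_1(K^2)$, asserting it as a lower bound would force equality.

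For what it is worth, the paper's proof of this second branch makes the same leap: it bounds $\lambda_1(U^TKX_fX_f^TKU)$ from below by a maximum of three terms in which $\lambda_n$ is applied to only one factor of the triple product at a time, which does not follow from iterating the two-factor inequality of Theorem \ref{thm ineq product} (iteration forces $\lambda_n$ onto at least two of the three factors, hence a trivial bound of zero here). So you have correctly isolated the crux of the lower bound, but neither your sketch nor the paper's argument actually derives the term $\sqrt{(\beta-\beta^2)\lambda_n(B_0)\lambda_1(P_f)\lambda_1(K^2)}$ from the stated hypotheses; some additional assumption relating the ranges of $P_f$ and $K$, or a different argument, would be needed. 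Everything else in your proposal is sound and essentially coincides with the paper's proof.
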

	\begin{proof}
		We know that the Hessian matrix of the Hybrid 4D-Var can be expressed as follows,
		\begin{align}
			\begin{cases}
				&{S_{P4D}} = I_{n+m} + U_{h}^{T}\hat{H}^{T}\hat{R}^{-1}\hat{H}U_{h},\\
				&U_{h} = \left[\sqrt{1-\beta}U \ \  \sqrt{\beta}X_{f}\right].
			\end{cases}
		\end{align}
    Given that $U_{h}^{T}\hat{H}^{T}\hat{R}^{-1}\hat{H}U_{h}$ is rank deficient, it is immediate that 
    \begin{align}\label{condition number equality}
			{\kappa(S_{P4D})} = 1 + \lambda_{1}(U_{h}^{T}\hat{H}^{T}\hat{R}^{-1}\hat{H}U_{h}).
    \end{align}
    Substituting $U_{h}$ in $S_{P4D}$, we derive
		\begin{align}\small
			{S_{P4D}} &= I_{n+m} + \begin{pmatrix}
				&\sqrt{1-\beta}U^{T} \\ & \sqrt{\beta}X_{f}^{T}
			\end{pmatrix} K \begin{pmatrix}
				&\sqrt{1-\beta}U& \sqrt{\beta}X_{f}\end{pmatrix} 
			\\
			&= I_{n+m} + \begin{pmatrix}
				&(1-\beta)U^{T}KU & 0 \\
				&0& \beta X_{f}^{T}KX_{f}
			\end{pmatrix} + \begin{pmatrix}
				&0& \sqrt{\beta - \beta^{2}}U^{T}KX_{f} \\
				&\sqrt{\beta - \beta^{2}}X_{f}^{T}KU & 0
			\end{pmatrix}\\
                &:= I_{n+m} +  A_1 + A_2.
		\end{align}
		Since $U_{h}^{T}\hat{H}^{T}\hat{R}^{-1}\hat{H}U_{h} = A_{1}+A_{2}$, using Theorem \ref{thm ineq sum}, we get
		\begin{align}
			\max \left[\lambda_{1}(A_{1})+\lambda_{n+m}(A_{2}), \  \lambda_{n+m}(A_{1})+\lambda_{1}(A_{2}) \right]\leq\lambda_{1}(U_{h}^{T}\hat{H}^{T}\hat{R}^{-1}\hat{H}U_{h})\leq\lambda_{1}(A_{1})+\lambda_{1}(A_{2}).
		\end{align}
    We note that
\begin{align}
	\lambda_{1}(U^{T}KU) = \lambda_{1}(B_{0}K),   \lambda_{1}(X_{f}^{T}KX_{f}) =\lambda_{1}(P_{f}K), 
	\lambda_{1}(U^{T}KX_{f}X_{f}^{T}KU) = \lambda_{1}(B_{0}P_{f}K^{2}).	
	\end{align}
  Then, applying Theorem \ref{thm ineq product} to the matrix products, 
  we derive that
		\begin{align}
			&\lambda_{1}(A_{1}) = \max \left[(1-\beta)\lambda_{1}(U^{T}KU), \beta\lambda_{1}(X_{f}^{T}KX_{f})\right] \leq \max \left[(1-\beta)\lambda_{1}(B_{0})\lambda_{1}(K), \ \beta\lambda_{1}(P_{f})\lambda_{1}(K)\right],\\ & \lambda_{1}(A_{2}) = \sqrt{\lambda_{1}\left[(\beta-\beta^{2})U^{T}KX_{f}X_{f}^{T}KU\right]} \leq \sqrt{(\beta-\beta^{2})\lambda_{1}(B_{0})\lambda_{1}(P_{f})\lambda_{1}(K^{2})}.
		\end{align}
		So
		\begin{align}
			{\kappa(S_{P4D})} \leq 1 + \max \left[(1-\beta)\lambda_{1}(B_{0})\lambda_{1}(K), \ \beta\lambda_{1}(P_{f})\lambda_{1}(K)\right] + \sqrt{(\beta-\beta^{2})\lambda_{1}(B_{0})\lambda_{1}(P_{f})\lambda_{1}(K^{2})}.
		\end{align}
	In the lower bound, we use that 
	\begin{align}\label{lower bound of the product}
	\max \left[\lambda_{1}(A_{1}), \lambda_{1}(A_{2}) \right]	\leq	\max \left[\lambda_{1}(A_{1})+\lambda_{n+m}(A_{2}), \  \lambda_{n+m}(A_{1})+\lambda_{1}(A_{2}) \right]\leq\lambda_{1}(U_{h}^{T}\hat{H}^{T}\hat{R}^{-1}\hat{H}U_{h}).
		\end{align}
Applying Theorem 2 
then yields (we note that $K$ is rank deficient, such that $\lambda_{n}(K) = 0$) 
\begin{align}
	&\lambda_{1}(U^{T}KU)\geq \max\left[\lambda_{n}(B_{0})\lambda_{1}(K),\lambda_{1}(B_{0})\lambda_{n}(K)\right] =\lambda_{n}(B_{0})\lambda_{1}(K),  \\ &\lambda_{1}(X_{f}^{T}KX_{f}) \geq \max\left[ \lambda_{1}(P_{f})\lambda_{n}(K),\ \lambda_{n}(P_{f})\lambda_{1}(K)\right] = 0, \\& \lambda_{1}(U^{T}KX_{f}X_{f}^{T}KU) \geq \max\left[\lambda_{1}(B_{0})\lambda_{1}(P_{f})\lambda_{n}(K^{2}),\lambda_{1}(B_{0})\lambda_{n}(P_{f})\lambda_{1}(K^{2}),\lambda_{n}(B_{0})\lambda_{1}(P_{f})\lambda_{1}(K^{2})\right] =\nonumber \\ &\lambda_{n}(B_{0})\lambda_{1}(P_{f})\lambda_{1}(K^{2}) 
	\end{align}
and giving us
		\begin{align}\label{lower bound lam A1 A2}
	\lambda_{1}(A_{1}) \geq (1-\beta)\lambda_{1}(K)\lambda_{n}(B_{0}), \ \lambda_{1}(A_{2}) \geq \sqrt{(\beta - \beta^{2})\lambda_{n}(B_{0})\lambda_{1}(P_{f})\lambda_{1}(K^{2})}.
\end{align}
By substituting $\max\left[\lambda_{1}(A_{1}),\lambda_{1}(A_{2})\right]$ into (\ref{lower bound of the product})
we conclude that  
\begin{align}
	{\kappa(S_{P4D})}\geq 1 + \max\left[  (1-\beta)\lambda_{1}(K)\lambda_{n}(B_{0}), \sqrt{(\beta - \beta^{2})\lambda_{n}(B_{0})\lambda_{1}(P_{f})\lambda_{1}(K^{2})}\right].
\end{align}	
	\end{proof}
Another version of the bounds can be derived by directly applying Theorem \ref{thm ineq product} to the product of $U_{h}KU_{h}$ instead of using the decomposition such as that in Theorem \ref{thm preconditioned main}.
	\begin{theorem}\label{thm preconditioned not main}
		Let $S_{P4D}$ be the Hessian matrix of Hybrid 4D-Var with CVT, then the condition number of $S_{P4D}$ satisfies
		\begin{align}
			1\leq {\kappa(S_{P4D})} \leq 1+\left[(1-\beta)\lambda_{1}(B_{0})+\beta\lambda_{1}(P_{f})\right]\lambda_{1}(K)
		\end{align}
	\end{theorem}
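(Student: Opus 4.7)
The plan is to bypass the block decomposition of $U_h^T K U_h$ used in the proof of Theorem \ref{thm preconditioned main} and instead bound the largest eigenvalue of the full matrix product at once. The key observation is the identity $U_h U_h^T = (1-\beta) B_0 + \beta P_f = B$, which follows from block multiplication together with the factorizations $B_0 = U^T U$ (with $U = B_0^{1/2}$ symmetric) and $P_f = X_f X_f^T$ built into the CVT construction.

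First I would observe that $S_{P4D} = I_{n+m} + U_h^T K U_h$ with $U_h \in \mathbb{R}^{n,n+m}$, so $U_h^T K U_h$ is an $(n+m)\times(n+m)$ matrix of rank at most $n$ and is therefore rank deficient. Consequently its smallest eigenvalue is zero, the smallest eigenvalue of $S_{P4D}$ is $1$, and
\[
\kappa(S_{P4D}) = 1 + \lambda_1(U_h^T K U_h).
\]
This immediately yields the trivial lower bound $\kappa(S_{P4D}) \geq 1$.

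For the upper bound, I would use the standard fact that $AB$ and $BA$ share the same nonzero eigenvalues to rewrite
\[
\lambda_1(U_h^T K U_h) = \lambda_1(K U_h U_h^T) = \lambda_1(K B) = \lambda_1(B K).
\]
Then Theorem \ref{thm ineq product} applied to the positive semidefinite product $B K$ gives $\lambda_1(BK) \leq \lambda_1(B)\lambda_1(K)$, while Theorem \ref{thm ineq sum} applied to $B = (1-\beta)B_0 + \beta P_f$ gives $\lambda_1(B) \leq (1-\beta)\lambda_1(B_0) + \beta \lambda_1(P_f)$. Substituting these into the expression for $\kappa(S_{P4D})$ yields the claimed bound.

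No step is technically difficult; the substantive point is the identity $U_h U_h^T = B$, which collapses the cross-block structure and allows a one-shot application of Theorem \ref{thm ineq product}. The price paid is that the cross-term involving $\sqrt{\beta - \beta^2}$ that appeared in Theorem \ref{thm preconditioned main} is absorbed into the convex combination of $\lambda_1(B_0)$ and $\lambda_1(P_f)$, losing the explicit separation between the climatological and ensemble contributions — which is presumably why the paper records both versions rather than only this simpler one.
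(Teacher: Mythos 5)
Your proposal is correct and follows essentially the same route as the paper's own proof: both use the fact that $U_h^T K U_h$ and $K U_h U_h^T = KB$ share nonzero eigenvalues (via $U_h U_h^T = B$), then apply Theorem \ref{thm ineq product} and Theorem \ref{thm ineq sum}, and obtain the trivial lower bound from the rank deficiency of $U_h^T K U_h$. If anything, your version states the cyclic-eigenvalue identity with the correct matrix dimensions, whereas the paper's write-up transposes $U_h$ inconsistently; the substance is identical.
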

	\begin{proof}
		We note that 
		\begin{align}
			\lambda(U_{h}KU_{h}^{T}) = \lambda(U_{h}^{T}U_{h}K) = \lambda(B^{T}K).
		\end{align}
		Applying the eigenvalue inequality of the product, and using the symmetry of $B$, we find that
		\begin{align}\label{eigen inequality of products}
		\lambda_{1}(B^{T}K) \leq \lambda_{1}(B)\lambda_{1}(K).
		\end{align}
		Furthermore, the eigenvalue inequality of the sum yields
		\begin{align}\label{eigen inequality of sums}
			\lambda_{1}(B) \leq (1-\beta)\lambda_{1}(B_{0}) + \beta\lambda_{1}(P_{f}).
		\end{align}
		Applying (\ref{eigen inequality of products}) and (\ref{eigen inequality of sums}) directly to (\ref{condition number equality}), we obtain that 
		\begin{align}
	{\kappa(S_{P4D})} \leq 1+\left[(1-\beta)\lambda_{1}(B_{0})+\beta\lambda_{1}(P_{f})\right]\lambda_{1}(K)
		\end{align}
	For the lower bound, we note that applying Theorem 2 to $\lambda_{1}(B^{T}K)$ only yields
	\begin{align*}
		\lambda_{1}(B^{T}K)\geq \max \left[\lambda_{1}(B^{T})\lambda_{n}(K), \lambda_{n}(B^{T})\lambda_{1}(K) \right] 
  \ge 0.
		\end{align*}
	\end{proof}

	In terms of the effectiveness of the preconditioning, we note that the upper bounds given by Theorem \ref{thm preconditioned main} and Theorem \ref{thm preconditioned not main} are not controlled by the condition number of $B$, but by the largest eigenvalues of $B_{0}, P_{f}, K^{2}$ only. This is different from the upper bound of the unpreconditioned case. In addition, for the unpreconditioned case, $B = P_{f}$ at $\beta = 1$, indicating that $B$ becomes closer to a singular matrix as $\beta$ approaches 1. In such a case, the theory predicts that the condition number of $S_{4D}$ diverges to infinity as $\beta \to 1$. However, by implementing CVT, this divergence is eliminated.
	
	On the other hand, we note that the upper bound given by Theorem \ref{thm preconditioned main} is distinctively different from all the other versions, including the ones derived for the unpreconditioned cases. Namely, the upper bound of Theorem \ref{thm preconditioned main} contains a max function that selects the larger term of $(1-\beta)\lambda_{1}(B_{0})$ and $\beta\lambda_{1}(P_{f})$. We can thus anticipate a switching point of the upper bound as a function of $\beta$, which does not exist in other versions of the upper bound. In fact, this switching point provides useful information about the behaviour of ${\kappa(S_{P4D})}$ with respect to varying $\beta$. We will illustrate this with numerical examples in section 4. Similar to the unpreconditioned scenario, we find that these bounds do not reflect any contribution of the number $p$ of observation points, which means that they can not predict the trend of $\kappa({S_{4D}})$ with respect to a changing $p$.

	\section{experimental design}\label{section numerical experiments unpreconditioned}
Recalling the motivation of the theoretical studies in section 3, we note that the purpose of the bounds is to provide useful information on the actual condition number of the Hessian without having to compute the Hessian explicitly. However, for the bounds to be informative, we require two properties of them: the estimation given by these bounds should reflect the condition number; the bounds change with $\beta$ similarly to the condition number. The first property ensures that we can use the bound directly to estimate the condition number in practice; the second property guarantees that the bound is useful in analyzing the impact of the ensemble part on the condition number. To demonstrate the theories we use the special case of 3DVar.

	\subsection{Computing Error Covariance Matrices and the Observational Matrix}
	In this section, we give details of computing the error covariance matrices of $B_{0},P_{f}, R$ and the observational matrix $H$ on a one-dimensional grid.
	
	\begin{notation}
		Let $L_{0}, L_{ens}$ denote the correlation length scale associated with $B_{0}, P_{f}$, and $\sigma_{R_{0}}^{2}, \ \sigma_{B_{0}}^{2},\ \sigma_{P_{f}}^{2}$ be the variances of $R,B_{0},P_{f}$. Let $r$ denote the radius of a two dimensional disk $\mathcal{B}^{2}(0,r)$,  $\theta$ be the angular location of a point on the boundary of $\mathcal{B}^{2}(0,r)$ and $\theta_{i,j}$ be the angular difference between two grid point on the boundary of $\mathcal{B}^{2}(0,r)$.
	\end{notation}
	To simulate real-life applications, we use the Second-order Auto-regressive Correlation (SOAR) function to generate $B_{0}$, with a correlation length scale of $L_{0}$. The SOAR  function is used by the UK Met Office for Numerical Weather Prediction (NWP)\cite{tabeart2018conditioning,simonin2014doppler}. It is also frequently used in other DA applications\cite{gong2020inverse,cheng2021observation}. Following the formulation discussed in Tabeart et el (2018)\cite{tabeart2018conditioning}, we can compute the SOAR matrix from the following formula\cite{tabeart2018conditioning,waller2016theoretical},
	\begin{align}\label{soar}
		D_{L}({i,j}) = \left[1+2r{L^{-1}}\sin\left(\frac{\theta_{i,j}}{2}\right)\right]\exp\left[1+2r{L^{-1}}\sin\left(\frac{\theta_{i,j}}{2}\right)\right],
	\end{align}
	where $L$ is a correlation length scale.
	
	Applying (\ref{soar}), we can obtain $B_{0}$ directly by replacing $L$ with $L_{0}$, and $\theta_{i,j}$ with $2\pi{/}n$. Here we assume that the grid descritizing the boundary of $\mathcal{B}^{2}(0,r)$ is uniform and we set $r=1$. The resulting background error covariance matrix is then given by,
	\begin{align}
		B_{0} = \sigma_{B_{0}}^{2} D_{L_{0}}.
	\end{align}
	
	To produce the ensemble part $P_{f}$, we sample from a different SOAR matrix $B_{1}$ associated with a length scale of $L_{ens}$, such that,
	\begin{align}\label{B_{1}}
		B_{1} = \sigma_{P_{f}}^{2}D_{L_{ens}}.
	\end{align} To obtain the covariance matrix $P_{f}$ sampling from $B_{1}$, we generate a set of random vectors $\boldsymbol{w}_{k}\sim \mathcal{N}(0,1)$, then compute the sample covariance matrix of the set 
 $\{B_{1}^{1{/}2}\boldsymbol{w}_{1},B_{1}^{1{/}2}\boldsymbol{w}_{2},\cdots,B_{1}^{1{/}2}\boldsymbol{w}_{m}\}$. As the size of the set -- denoted by $m$ -- is restricted by $m\ < n$, $P_{f}$ is then guaranteed to be rank deficient.
	
	For the observational error covariance matrix $R$, we simply choose that, \begin{align}\label{R matrix}R = \sigma_{R_{0}}^{2}I_{p},\end{align} where $I_{p} \in \mathbb{R}^{p,p}$ is the identity matrix. 
	
	In terms of the observational operator, we consider four different types in our experiments. They are given as follows,
	\begin{align}\label{H choice}
		& {H}^{(1)}(i,j) = \begin{cases} &1, \ \ i=j,  \ \text{for} \ i=1\to p, \\ &0, \ \ i\neq j. \end{cases}; \ \ {H}^{(3)}(i,j) = \begin{cases}&1{/}5, \ \ j=\frac{n}{p}i-2\to \frac{n}{p}i+2, \ (mod(n)), \ \text{for} \ i=1\to p,  \\& 0.\end{cases};
		\nonumber \\ &{H}^{(2)}(i,j) = \begin{cases} &1, \ \ j=\frac{n}{p}i, \ \text{for} \ i=1\to p, \\ &0, \ \ else. \end{cases};  \  {H}^{(4)}(i,j) = \begin{cases}&1, \ \ i,j, \ \text{are choosen randomly and non-repeating,} \\& 0.\end{cases}
	\end{align} 
	The choice of $H^{(1)}$ corresponds to observing the first $p$ points of the domain, $H^{(2)}$ observes every $n/p$ points, $H^{(3)}$ is an observation which is a weighted sum over 5 grid points and $H^{(4)}$ chooses $p$ observations at random locations on the grid.	We note that $H^{(1)},H^{(2)}$ and $H^{(4)}$ satisfy the condition of Corollary \ref{coro 2}, while $H^{(3)}$ does not. 
	In all of the experiments, we fix the value of $n$ to be 500 such that the computation is small scale and similar to previous studies\cite{tabeart2018conditioning,haben2011conditioning}, and therefore the results can be compared. 
 
	\subsection{A comparison of different bounds}
	We note that four different versions of the bound have been presented in this paper, i.e., Theorem \ref{thm jemima 1} and \ref{thm unpreconditioned main} for the unpreconditioned cases, Theorem \ref{thm preconditioned main} and \ref{thm preconditioned not main} for the preconditioned cases with CVT. Before further investigating the details of these results, we want to determine which versions of these are the most effective (in terms of revealing the trend of the condition number itself and estimating it with a small error).
	{Considering the unpreconditioned cases, Theorem \ref{thm unpreconditioned main} has an apparent advantage of separating the impact of $P_{f}$. However, through repeated experiments we observe that in most cases the lower bound in Theorem \ref{thm unpreconditioned main} remains close to 1; hence it not effective.  On the other hand, the lower bound provided by Theorem \ref{thm jemima 1} produces a better result. Still, it is not close enough to the actual condition number to be a good estimate. 
		
{ In Figure~\ref{fig:unprecon} we show typical examples of the variation of these bounds with $\beta$.}  Figure \ref{fig lower 1} shows that, for the unpreconditioned case, as $\beta$ increases towards 1, the upper bound captures the shape of $\kappa({S_{3D}})$. {The divergence of $\kappa({S_{3D}})$ to infinity is captured by both versions of the upper bounds.} The upper bounds computed from Theorem \ref{thm ineq sum} and \ref{thm jemima 1} do not differ significantly. This is a general result for all choices of parameters we have used to conduct numerical case studies. Based on these observations, we decide to focus only on the upper bound from here onward.}
	
	In the preconditioned case, Figure \ref{fig lower 2}, we { note that the condition number is drastically reduced by the CVT compared to the unpreconditioned case. We} observe an inflection point in the upper bound of Theorem \ref{thm preconditioned main}. This coincides with the transition point of the condition number (from decreasing to increasing with $\beta$) { and so predicts the minimum of  $\kappa({S_{3D}})$.} The bound produced by Theorem \ref{thm preconditioned not main} does not capture such behaviour, while not being significantly closer to the actual condition number either.
 
	\begin{figure}[hbt!]
		\centering
		\begin{subfigure}[t]{.44\textwidth}
			\includegraphics[width=0.8\textwidth,height=0.5\textwidth]{"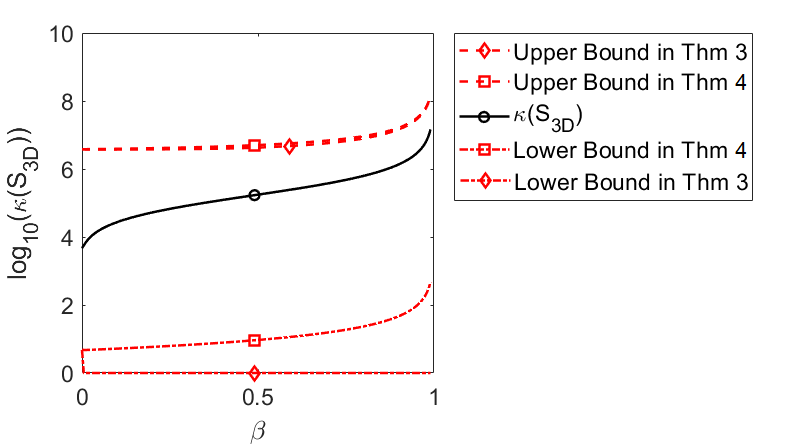"}
			\caption{}\label{fig lower 1}
		\end{subfigure}\hspace{0.6cm}
		\begin{subfigure}[t]{.44\textwidth}
			\includegraphics[width=0.8\textwidth,height=0.5\textwidth]{"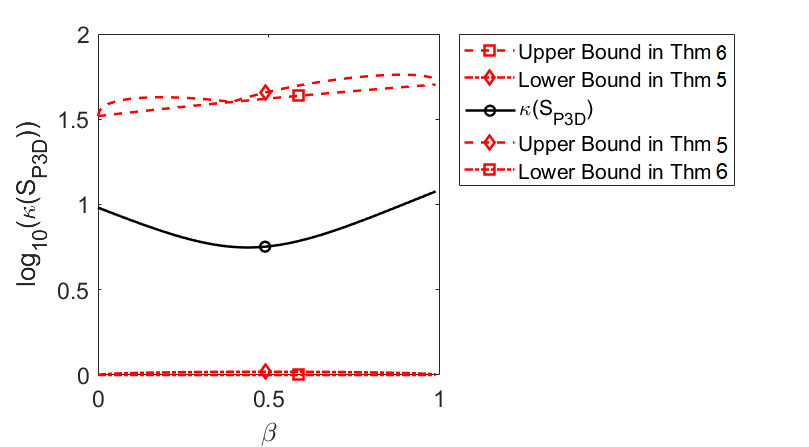"}
			\caption{}\label{fig lower 2}
		\end{subfigure}
      \captionsetup{width=.9\textwidth}

		\caption{A comparison of the variation of the bounds with $\beta$ { for (a) $\kappa({S_{3D}})$ and (b) $\kappa({S_{P3D}})$ }. The parameters are as follows, $L_{ens} = 0.05, L_{0}=0.2, \sigma_{B_{0}} = \sigma_{P_{f}} = \sigma_{R_{0}} = 1$, $m = 50, p = 100$. The linear observation operator is chosen to be $H_{0} = H^{(4)}$.} \label{fig:unprecon}
\end{figure}

	These results are valid for all different choices of matrices and parameters.Therefore we focus our study on the bound produced by Theorem \ref{thm unpreconditioned main} and Theorem \ref{thm preconditioned main} only, because of their superiority in separating the impact of different matrices and capturing the shape of the actual condition number (see Figure \ref{fig lower 2}). We note that the condition numbers are plotted in $\log_{10}$ scale in this paper.
	
	\section{Case Studies for Unpreconditioned 3D-Var}
	In this section, we illustrate the upper bound given by Theorem \ref{thm unpreconditioned main} with multiple case studies. In these case studies we change parameters associated with $B_{0},P_{f},\hat H, \hat R$ and observe the responses in the condition number of $S$ and its upper bound. In particular, we are especially interested in the relationship between the conditioning of the system and the weight of the ensemble part (i.e., $\beta$). {We note again that the case studies in the following sections use 3D-Var as a special case of 4D-Var to demonstrate the theories. Therefore, $\hat H$ and $\hat R$ are replaced with $H_{0}$ and $R_{0}$, respectively. Additionally, we will denote the Hessian as $S_{3D}$ and consider it as a special case of $S_{4D}$.}
	
	As the ensemble error covariance matrix is rank deficient, we can then expect that the balance of $B_{0}$ and $P_{f}$ is particularly important in the conditioning of the system. For example, when $P_{f}$ is the dominant component of the Hessian matrix $S_{3D}$, the condition number of $S_{3D}$ is likely to be large and the system ill-conditioned. In addition, as $\beta \to 1$, the Hessian matrix  $S_{3D} \to P_{f}^{-1} +  H_{0}^{T} R_{0}^{-1}  H_{0}$, the limit is clearly ill-defined as $P_{f}^{-1}$ does not exist. Meanwhile, the balance of $B_{0}$ and $P_{f}$ is also controlled by the relative sizes of physical length scales $L_{0},L_{ens}$ and the variances $\sigma_{B_{0}},\sigma_{P_{f}}$.
	
	Recalling the upper bound in Theorem \ref{thm unpreconditioned main}, it is predominantly controlled by the largest eigenvalues of $B_{0}$ and $P_{f}$. Meanwhile, the largest eigenvalues of $B_{0}$ and $P_{f}$ vary with their correlation length scales\cite{haben2011conditioning}. As Figure \ref{lambda1B with L} shows, the largest eigenvalue of $B_{0}$ increases with the correlation length scale; the general trend is similar for $P_{f}$ except for random fluctuations caused by the sampling noise. Consequently, we expect $ 		\kappa({S_{3D}})$ and its bound to change when the physical length scales ($L_{0},L_{ens}$) alter. 
	\begin{figure}[hbt!]
		\centering
		\includegraphics[width=0.35\textwidth,height=0.22\textwidth]{"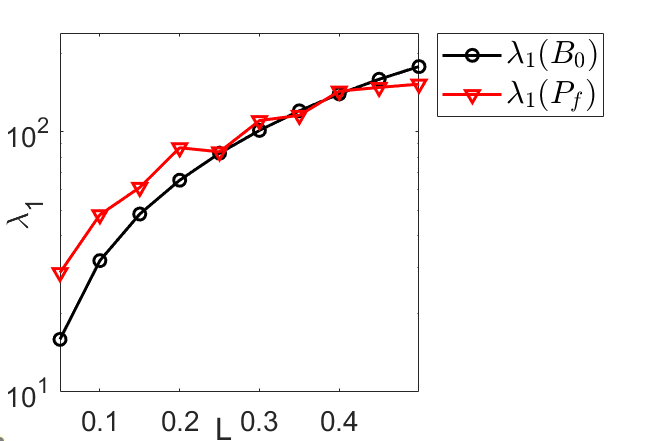"}
		\caption{The largest eigenvalues of $B_{0}$ and $P_{f}$ as functions of correlation length scale.}\label{lambda1B with L}
	\end{figure}
	
	\begin{figure}[hbt!]
		\centering
		\begin{subfigure}[t]{.44\textwidth}
			\includegraphics[width=0.87\textwidth,height=0.48\textwidth]{"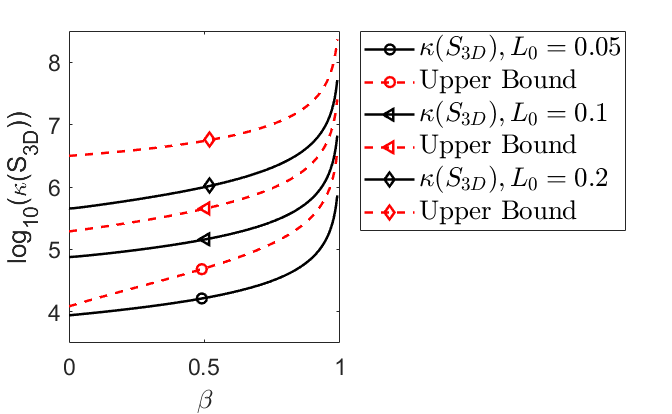"}
			\caption{}\label{L0 1}
		\end{subfigure}\hspace{0.5cm}
		\begin{subfigure}[t]{.44\textwidth}
			\includegraphics[width=0.87\textwidth,height=0.48\textwidth]{"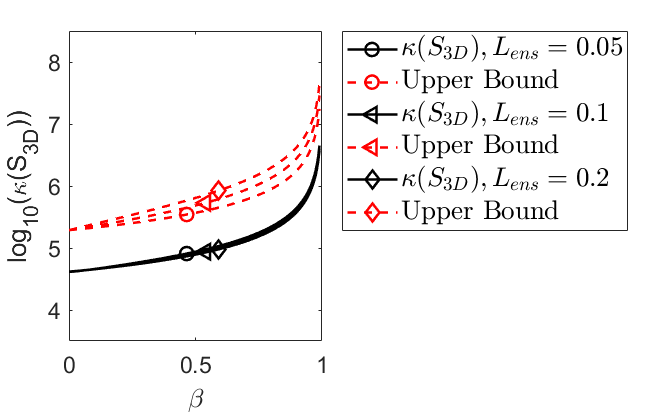"}
			\caption{ }\label{Lens 1}
		\end{subfigure}
		\vspace{0.2 cm}
		\centering
		\begin{subfigure}[t]{.44\textwidth}
			\includegraphics[width=0.85\textwidth,height=0.48\textwidth]{"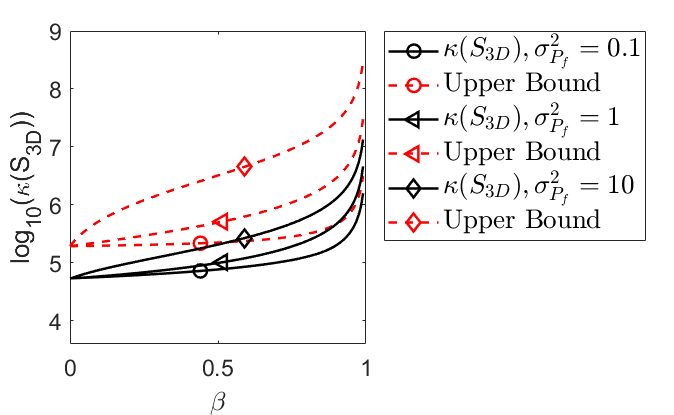"}
			\caption{}\label{sigBens}
		\end{subfigure}\hspace{0.6cm}
		\begin{subfigure}[t]{.44\textwidth}
			\includegraphics[width=0.85\textwidth,height=0.48\textwidth]{"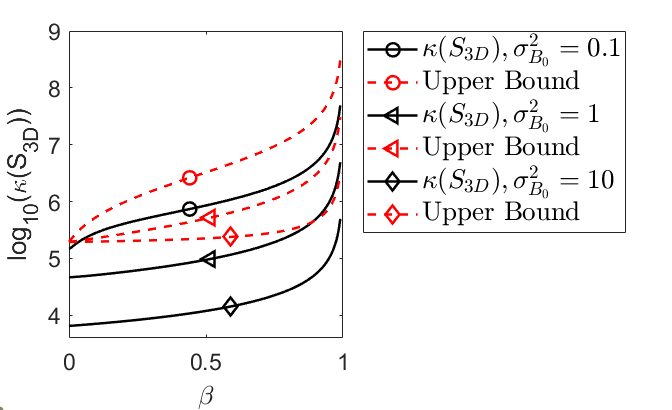"}
			\caption{ }\label{sigB0}
		\end{subfigure}	
        \captionsetup{width=.9\textwidth}

		\caption{The variation of the condition number and the upper bound with $\beta$ for different values of (a) $L_{0}$, (b) $L_{ens}$, (c) $\sigma_{B_{0}}$ and (d) $\sigma_{P_{f}}$, for the unpreconditioned case with $m = 100, p = 100$, $\sigma_{P_{f}}^{2} =\sigma_{B_{0}}^{2} = \sigma_{R_{0}}^{2} = 1$ and $H_{0}=H^{(4)}$. Figure (a) showcases the effect of $L_{0}$  (fixing $L_{ens} = 0.1$). Figure (b) demonstrates the effect of $L_{ens}$ (fixing $L_{0} = 0.1$). Figure (c) demonstrates the effect of $\sigma_{P_{f}}^{2}$ (fixing $\sigma_{B_{0}}^{2} = 1$). Figure (d) illustrates the effect of $\sigma_{B_{0}}^{2}$ (fixing $\sigma_{P_{f}}^{2} = 1$). }\label{effect of L}
	\end{figure}
	\begin{figure}[hbt!]
		\centering
		\begin{subfigure}[t]{.41\textwidth}
			\includegraphics[width=0.95\textwidth,height=0.50\textwidth]{"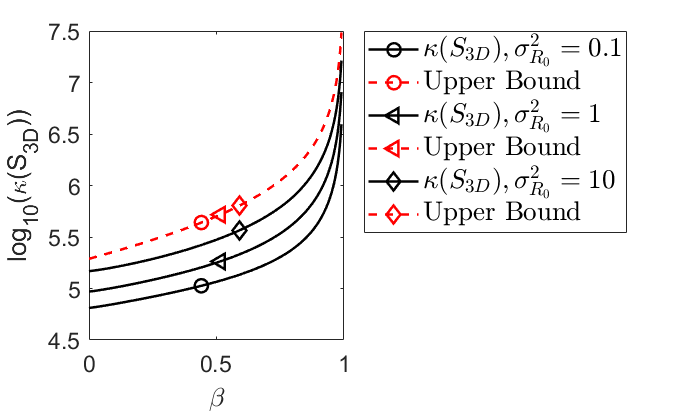"}
			\caption{}\label{sigmaR 1}
		\end{subfigure}\hspace{0.6cm}
		\begin{subfigure}[t]{.41\textwidth}
			\includegraphics[width=0.95\textwidth,height=0.50\textwidth]{"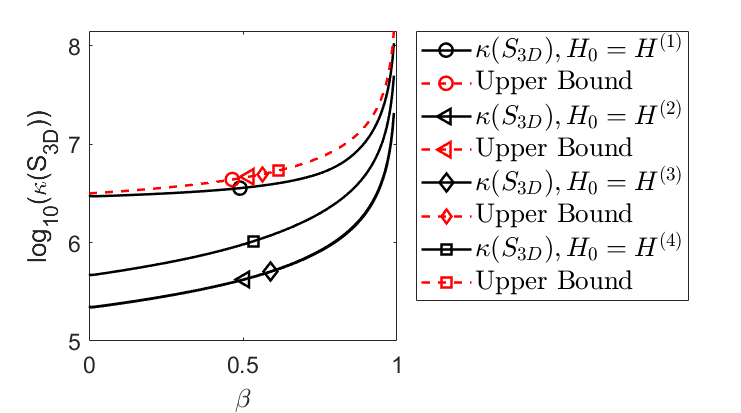"}
			\caption{}\label{H}
		\end{subfigure}
		\begin{subfigure}[t]{.41\textwidth}
			\includegraphics[width=0.95\textwidth,height=0.50\textwidth]{"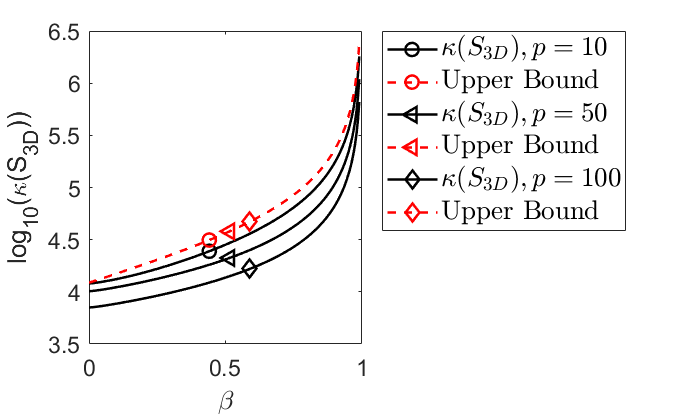"}
			\caption{}\label{p}
		\end{subfigure}
        \captionsetup{width=.9\textwidth}

		\caption{The variation of the condition number and the upper bound with $\beta$ for different (a) values of $\sigma_{R_{0}}^2$ (fixing $\sigma_{B_{0}}^{2} = \sigma_{P_{f}}^{2} =1, p = 100, \ H_{0} = H^{(4)}$),  (b) observation operators and (c) number of observations (fixing $\sigma_{R_{0}}^{2} = \sigma_{B_{0}}^{2} = \sigma_{P_{f}}^{2} = 1, H_{0} = H^{(4)}$). This group of figures showcase the unpreconditioned case with parameters of $m = 100$, $L_{ens} = L_{0} = 0.1$.}\label{effect of R}
	\end{figure}
	As an important justification of the effectiveness of the upper bound, we observe that the shape of the upper bound is similar to the condition number in all four case studies presented in Figure \ref{effect of L}. We also note that in this set of experiments, we observe that the impact of $L_{ens}$ on the conditioning and the bounds is less significant than $L_{0}$ (compare Figure \ref{L0 1},\ref{Lens 1}). In Figure \ref{sigBens}, we observe that the trend of the condition number with respect to $\sigma_{P_{f}}$ is well captured by the upper bounds. Also, in Figure \ref{sigB0}, we observe that upper bound does reveal a general trend of the condition number with respect to $\sigma_{B_{0}}$, but the upper bound does not reflect a significant reduction of the condition number at $\beta \sim 0$. The reason for this discrepancy is unknown.
	
	It is an important finding that the upper bound starts to sharply increase and diverge to infinity, and this transition occurs at about the same $\beta$ as the condition number of the Hessian. This then indicates that the bound is informative from a numerical point of view, e.g., it informs a constraint on $\beta$ if one seeks to avoid a sudden deterioration of the conditioning of the system in Hybrid 4D-Var. On the other hand, we also note that in the three cases of changing $L_{0},\sigma_{B_{0}}$ and $\sigma_{P_{f}}$, the upper bounds reflect the direction of change in the condition number of the Hessian. Thus it shows promising results that the upper bound can be used to qualitatively analyse the conditioning of the system when these parameters alter with time. Furthermore, the upper bound is two orders of magnitude above the condition number. We find similar conclusions for the cases of changing $\sigma_{R_{0}}$, the observation operator  $H_{0}$ and the number of observations $p$ (see Figure \ref{effect of R}).  
	
	We note that the upper bound for a fixed $B_{0}$ and $P_{f}$ does not change insofar as the largest eigenvalue of $H_{0}^{T}R_{0}H_{0}$ stays the same (see Theorem \ref{thm unpreconditioned main}).  Comparing $\lambda_{1}(H_{0}^{T}R_{0}H_{0})$ for different versions of $H_{0}$, we find that $H^{(1)},H^{(2)}$ and $H^{(4)}$ have the same largest eigenvalue. It is slightly smaller for $H^{(3)}$ but not significantly. We also point out that previous investigations in 4D-Var \cite{haben2011conditioning,tabeart2018conditioning} also indicate that a small change in $\sigma_{R_{0}}^{2}$ does not change the bound noticeably. Thus we anticipate that the upper bound remains similar for different versions of $H_{0}$. This is confirmed by cases in Figure \ref{H}. Meanwhile, the condition number of the Hessian is the largest for the choice of $H_{0}=H^{(1)}$, while choosing $H_{0}=H^{(2)}$ or $H_{0} = H^{(3)}$ produces a similar result and they yield the best conditioning of the Hessian; the condition number associated with the randomized version $H_{0}=H^{(4)}$ lies between those of $H^{(1)}$ and $H^{(2)},H^{(3)}$. Thus the observation indicates that evenly distributed observation points produce better conditioning of the system, while partial observations concentrated in a small region lead to the opposite.
	
	The upper bound in (\ref{ineq coro 2}) also implies that changing the number of observations does not change the upper bound (as the case study in Figure \ref{p} confirms).	Lastly, we also verified that the upper bound also remains effective when the number of ensemble members changes. We tested cases where the number of ensemble members increases from 50 to 400. In all these cases, we find that the upper bounds have similar shapes to the condition number, and they provide a good estimation of the value of the condition number.

	\section{numerical experiments for the preconditioned Hybrid 3D-Var with CVT}\label{section numerical experiments preconditioned}
	Following the case studies for the unpreconditioned cases, we now conduct similar experiments to illustrate Theorem \ref{thm preconditioned main} and examine the predictions using Theorem \ref{thm preconditioned main} for preconditioned cases with CVT. In the experiments we focus on three major issues: firstly we validate the correctness of the bound; secondly, we illustrate the bound reflects the behaviour of the conditioning in general (with a few exceptions); lastly, we make a comparison to the unpreconditioned cases. We note that each error covariance matrix and observation operator is given by Section \ref{section numerical experiments unpreconditioned}. 
	
	An immediate observation from Theorem \ref{thm preconditioned main} is that the switching point in the upper bound shifts with the relative sizes of $\lambda_{1}(B_{0})$ and $\lambda_{1}(P_{f})$. More specifically, the max function in the upper bound in (\ref{ineq preconditioned main}) switches at a larger value of $\beta$ when $\lambda_{1}(B_{0})$ increases, and the opposite occurs when $\lambda_{1}(P_{f})$ increases. Furthermore, we know that $\lambda_{1}(B_{0})$ and $\lambda_{1}(P_{f})$ become larger as $L_{0}$ and $L_{ens}$ increase (see Figure \ref{lambda1B with L}), and the same is true for increasing $\sigma_{B_{0}},\sigma_{P_{f}}$.	As Figure \ref{fig:effect of L0 Lens sigma B0 sigma Pf} shows, the results of these four case studies justifies the observations implied by Theorem \ref{thm preconditioned main}, and the switching point of the upper bound also predicts the minimum of the condition number.  	On the other hand, as $\lambda_{1}(B_{0}),\lambda_{1}(P_{f})$ increase with $L_{0},L_{ens},\sigma_{B_{0}},\sigma_{P_{f}}$, we anticipate that the upper bound would increase with these parameters. This is confirmed by the case studies (see Figure \ref{fig:effect of L0 Lens sigma B0 sigma Pf}). Crucially, we find that the upper bound predicts the trend of the condition number of Hessian with respect to $\beta$ in all four cases. For example, in Figure \ref{L0 CVT}, we observe that the inflection points of the upper bounds predict the minima of the condition numbers, and they both move rightward when $L_{0}$ increases. In Figure \ref{Lens CVT}, we find that the inflection points of the upper bounds and the minima of the condition numbers move leftwards when $L_{ens}$ increases. In Figure \ref{sigmaB0 CVT}, the inflection points of the upper bounds and the minima of the condition numbers move rightwards when $\sigma_{B_{0}}$ increases, and in figure \ref{sigmaPf CVT} we find that the trend of the condition number reacting to $\sigma_{P_{f}}$ is well captured by the upper bounds, the inflection points of the upper bounds and the minima of the condition numbers move leftwards when $\sigma_{P_{f}}$ increases. However, we want to point out that the changes in the conditioning is very limited in these preconditioned cases and are normally less than one order of magnitude.
	\begin{figure}[hbt!]
		\centering
		\begin{subfigure}[t]{.44\textwidth}
			\includegraphics[width=0.90\textwidth,height=0.55\textwidth]{"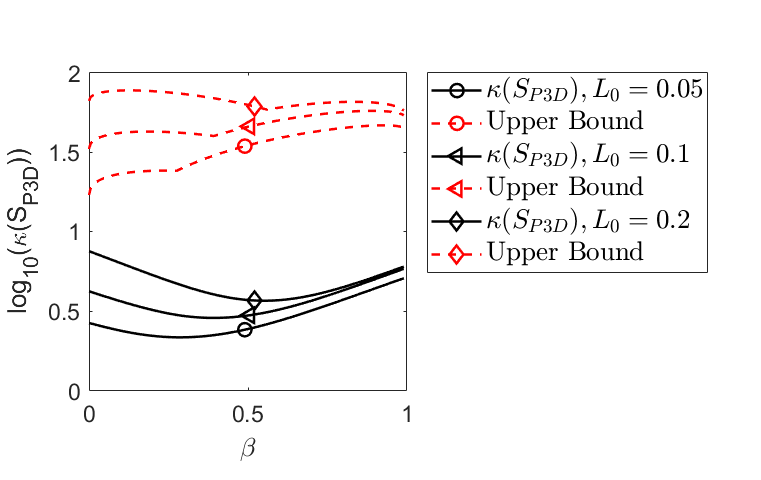"}
			\caption{}\label{L0 CVT}
		\end{subfigure}\hspace{0.6cm}
		\begin{subfigure}[t]{.44\textwidth}
			\includegraphics[width=0.90\textwidth,height=0.55\textwidth]{"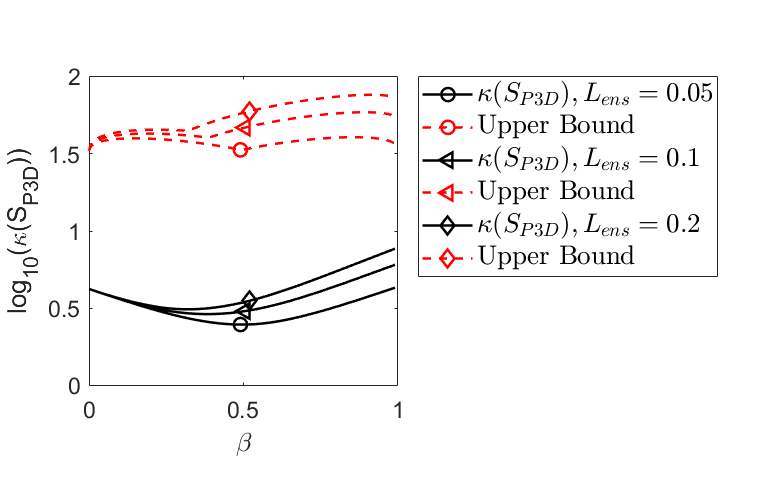"}
			\caption{}\label{Lens CVT}
		\end{subfigure}
		\vspace{0.2 cm}
		\begin{subfigure}[t]{.44\textwidth}
			\includegraphics[width=0.90\textwidth,height=0.55\textwidth]{"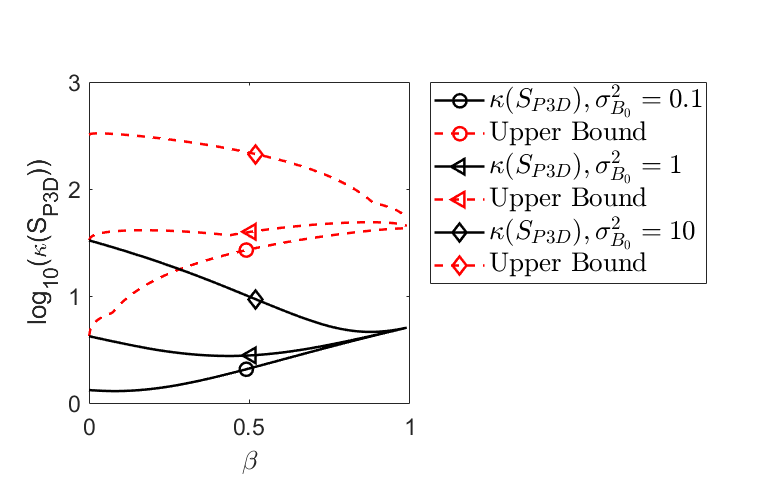"}
			\caption{}\label{sigmaB0 CVT}
		\end{subfigure}\hspace{0.6cm}
		\begin{subfigure}[t]{.44\textwidth}
			\includegraphics[width=0.90\textwidth,height=0.55\textwidth]{"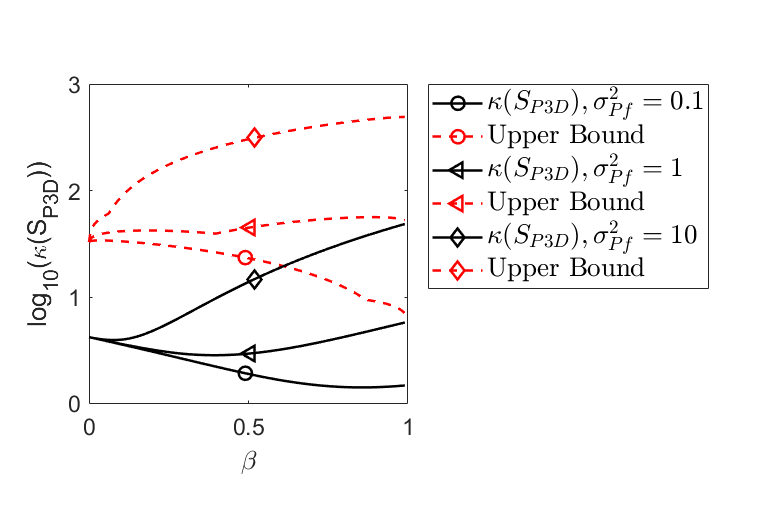"}
			\caption{}\label{sigmaPf CVT}
		\end{subfigure}\hfil
        \captionsetup{width=.9\textwidth}

		\caption{Same settings as Figure~\ref{effect of L}, but for the CVT preconditioned  cases.}
		\label{fig:effect of L0 Lens sigma B0 sigma Pf}
	\end{figure}
	
	Furthermore, compared to the unpreconditioned case, these results show a clear improvement of the conditioning in Hybrid 4D-Var with CVT (we note that this is something that is also found in standard 4D-Var in previous work\cite{tabeart2022new}). We observe that CVT leads to a maximum reduction of six magnitudes in the condition number of Hessian. We also note that in all cases presented in the preconditioned Hybrid 4D-Var, the condition number of Hessian does not diverge to infinity at $\beta = 1$ , and this is a crucial difference from the unpreconditioned Hybrid 4D-Var. 
	
	Theorem \ref{thm preconditioned main} also shows that the upper bound takes a larger value when the largest eigenvalue of $K = H_{0}^{T} R_{0} H_{0}$ is bigger. Furthermore, since $\sigma_{R_{0}}^{-2}$ is a scaling factor of $K$, we can then anticipate that the upper bound grows with a decreasing $\sigma_{R_{0}}^{2}$. The case presented in Figure \ref{preconditioned changing sigma R} not only justifies this observation but also shows that the condition number itself follows the same trend.
	
	\begin{figure}[hbt!]
		\centering
		\begin{subfigure}{.44\textwidth}
			\includegraphics[width=0.90\textwidth,height=0.55\textwidth]{"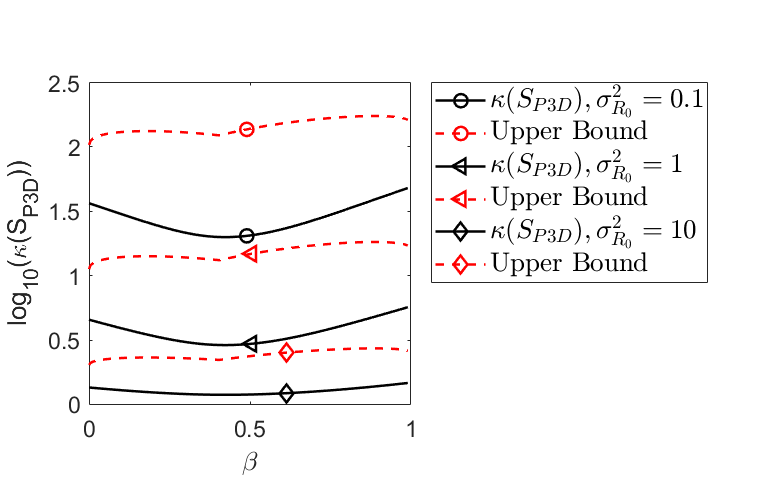"}
			\caption{}\label{preconditioned changing sigma R}
		\end{subfigure}\hspace{0.6cm}
		\begin{subfigure}{.44\textwidth}
			\includegraphics[width=0.90\textwidth,height=0.55\textwidth]{"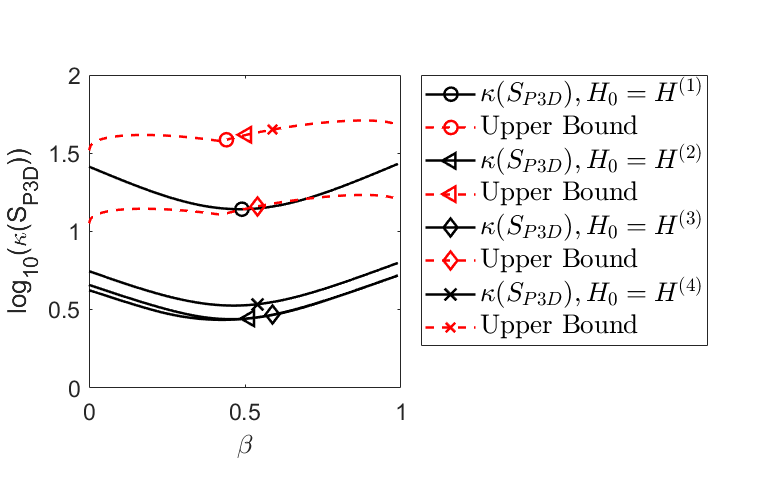"}
			\caption{}\label{cvt different H}
		\end{subfigure}
		\begin{subfigure}{.44\textwidth}
			\includegraphics[width=0.90\textwidth,height=0.55\textwidth]{"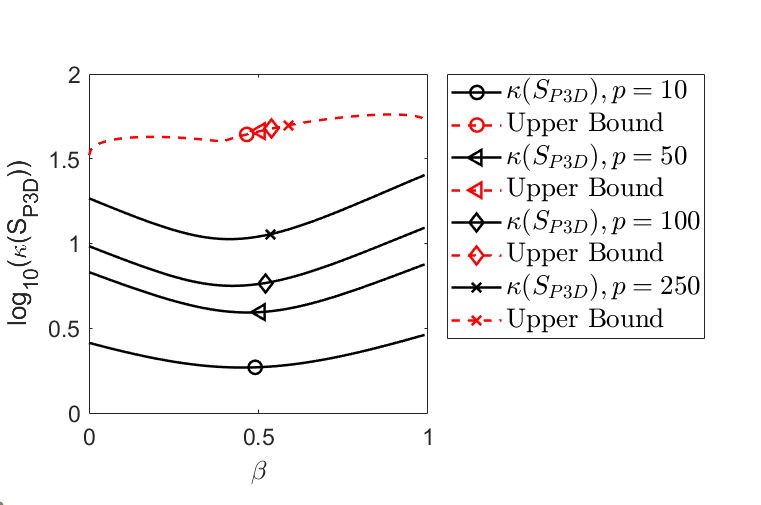"}
			\caption{}\label{cvt different p}
		\end{subfigure}
        \captionsetup{width=.9\textwidth}

		\caption{As settings as Figure~\ref{effect of R}, but for the CVT preconditioned cases.}
		\label{fig:effect of SigmaR}
	\end{figure}
	
	On the impact of choosing different versions of $H_{0}$, the trend is similar to the unpreconditioned case; we find that evenly spreading out the observation across the whole domain leads to better conditioning and skewing observations into a local region result in worse conditioning. We note that the upper bound remains the same for $H_{0} = H^{(1)},H^{(2)}$ and $H^{(4)}$ (see Figure \ref{cvt different H}, \ref{cvt different p}). This is because $K = H_{0}^{T} R_{0} H_{0}$ shares the same largest eigenvalue for these three versions, whereas $H^{(1)}$ produces a smaller maximum eigenvalue for $K$, therefore the upper bound is smaller. On the other hand, the impact of the number $p$ of observations on the condition number is opposite to that of the unpreconditioned case (this is similar to previous reports of 4D-Var \cite{tabeart2018conditioning,tabeart2022new}), but the bound estimation does not reflect this trend, as the largest eigenvalue of $K$ does not change with $p$. We note that in both unpreconditioned and preconditioned cases, the upper bounds cannot predict the impact of $p$ on the conditioning of the system. The effect of sampling noise in $P_{f}$ is similar to the unpreconditioned case, which is that the impact on the conditioning or the upper bound is insignificant. We find that this is true even with a small sample size. 
	
	As an important justification of the effectiveness of the upper bound given by Theorem \ref{thm preconditioned main}, we observe 
 the changes of the upper bound with respect to $\beta$ provide valuable information about the transition of the condition number (from decreasing to increasing); the inflection point of the upper bound predicts the minimum of the condition number. This is particularly important because it informs an optimal choice of $\beta$ from a numerical perspective of obtaining the best conditioning of the system. We therefore can conclude that the upper bound is useful for providing qualitative information about the actual conditioning of the system. 
	\section{Convergence Test of a Conjugate Gradient Routine}
	We note that there are well-known situations where the condition number provides a pessimistic indication of convergence speed (e.g. in the case of repeated or clustered eigenvalues)\cite{tabeart2022new}. { Here we use hybrid 3D-Var as a special case of hybrid 4D-Var to illustrate that for hybrid variational assimilation} the convergence speed follows a similar trend to the condition number as the weight of the ensemble part increases. 
	
	Following a similar method to section 5.3.2. of Tabeart et el\cite{tabeart2018conditioning}, we study how the speed of convergence of a conjugate
	gradient method applied to the linear system $S_{3D}\boldsymbol{x} = \boldsymbol{b}$ changes with the weight $\beta$ of the ensemble part. In the first test, the matrix $S_{3D}$ is given by the Hessian of the unpreconditioned 3D-Var (Section 5), and the vector $\boldsymbol{b}$ is given by Haben\cite{haben2011conditioning} in Section 3.2 ($\boldsymbol{b} = B^{-1}(\boldsymbol{x}_{b} - \boldsymbol{x}_{0}) - H_{0}^{T}\boldsymbol{d})$, where the vectors $\boldsymbol{x}_{b} - \boldsymbol{x}_{0},\boldsymbol{d}$ are chosen to be random at the beginning of the trial). For the computation of $S_{3D}$, we choose the parameters as follows, $L_{0} = 0.1, L_{ens} = 0.05, \sigma_{B_{0}}=\sigma_{P_{f}} = \sigma_{R_{0}} = 1$,$p = 100$ and $H_{0} = H^{(4)}$, which are in line with our previous case studies.
 
 As figure \ref{fig:noncvt cvg test 1} shows, the condition number of the Hessian shows the same trend as the number of iterations performed to reach the tolerance threshold. We can observe that the conditioning is a good proxy to study the convergence speed in this case.
	\begin{figure}[hbt!]
	\centering
	\begin{subfigure}{.40\textwidth}
		\includegraphics[width=0.97\textwidth]{"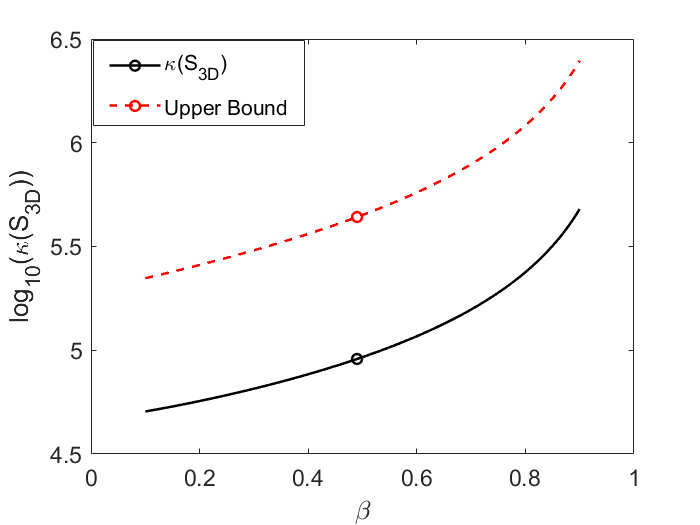"}
		\caption{}\label{noncvt cvg test kappa}
	\end{subfigure}\hspace{0.8 cm}
	\begin{subfigure}{.40\textwidth}
		\includegraphics[width=0.97\textwidth]{"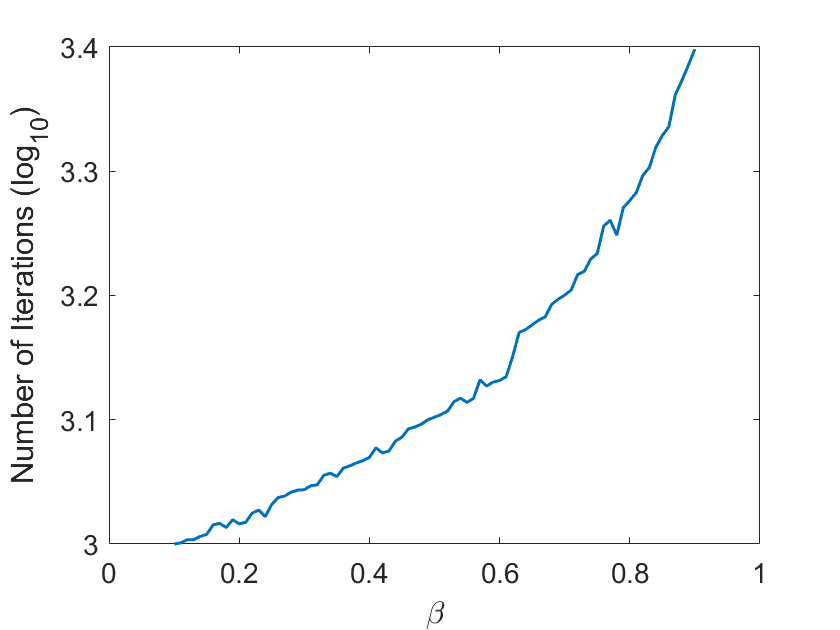"}
		\caption{}\label{noncvt cvg test num of iters}
	\end{subfigure}
       \captionsetup{width=.9\textwidth}

	\caption{Convergence test for an unpreconditioned case. Figure (a) displays the condition number of $S_{3D}$ and its upper bound. The parameters used to compute $S_{3D}$ are $L_{0} = 0.1, L_{ens} = 0.05, \sigma_{B_{0}}=\sigma_{P_{f}} = \sigma_{R_{0}} = 1$,$p = 100$ and we choose $H_{0} = H^{(4)}$. Figure (b) displays the number of iterations.}
	\label{fig:noncvt cvg test 1}
\end{figure}	
For the preconditioned case with CVT, shown in Figure~\ref{fig:cvg test 1}, we find that the difference in the condition number as $\beta$ varies is marginal relative to the unpreconditioned case. One of the reason for this is that the Hessian has more eigenvalues clustered around 1, which is a result of CVT. However, the number of iterations taken to converge still follows the general pattern of the condition number, with higher values when $\beta$ is close to 0 or 1. 
	\begin{figure}[hbt!]
	\centering
	\begin{subfigure}{.40\textwidth}
		\includegraphics[width=0.97\textwidth]{"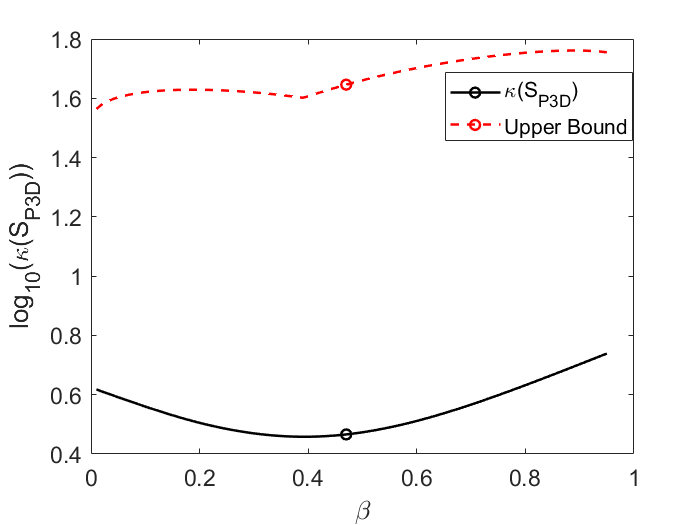"}
		\caption{}\label{cvg test kappa}
	\end{subfigure}\hspace{0.8 cm}
	\begin{subfigure}{.40\textwidth}
		\includegraphics[width=0.97\textwidth]{"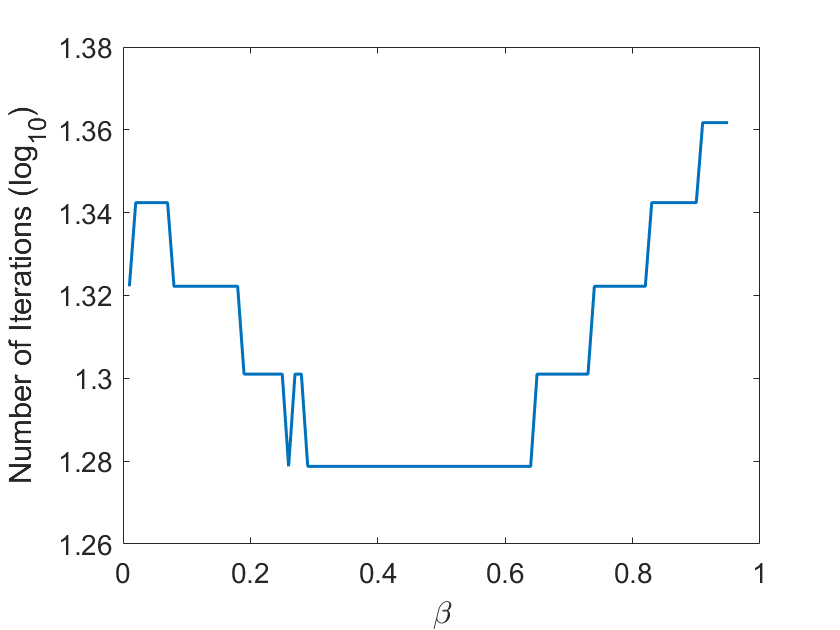"}
		\caption{}\label{cvg test num of iters}
	\end{subfigure}
       \captionsetup{width=.9\textwidth}
	\caption{Convergence test for a case preconditioned with CVT. Figure (a) displays the condition number of $S_{P3D}$ (as a special case of $S_{P4D}$) and its upper bound. {The parameters used to compute $S_{P3D}$ are $L_{0} = 0.1, L_{ens} = 0.05, \sigma_{B_{0}}=\sigma_{P_{f}} = \sigma_{R_{0}} = 1$, $p = 100$ and we choose $H_{0} = H^{(4)}$.} Figure (b) displays the number of iterations. }
	\label{fig:cvg test 1}
\end{figure}	

 { Finally, we conducted experiments with stopping criteria of different orders of magnitude to investigate the impact on the convergence speed trend. For $\epsilon>10^{-6}$ there is very little variation in the number of iterations as $\beta$ changes. For values of $\epsilon$ smaller than $10^{-6}$, the number of iterations increases slightly, but the trend in Figure 8(b) remains consistent.}
	
	\section{Summary}
	In this paper, we established a set of theories for the conditioning of Hybrid 4D-Var. These theories provide effective upper bounds for the condition number of the Hessian. These theoretical results are illustrated by numerical case studies {using the special case of Hybrid 3D-Var}. In numerical experiments we tested that the upper bounds have similar shapes to the condition number with respect to the weight of the ensemble part  (i.e. $\beta$). Thus they can provide a useful estimation of the behaviour of the condition number of the Hessian. In addition, the upper bound enabled us to study the condition number through parameters associated with different components and observe their interactions.  We conclude that the upper bound is effective in explaining the conditioning of the Hessian matrix in general. However, the lower bound does not provide useful information about the condition number in all cases studied. We summarize our findings as follows, 
	\begin{itemize}
		\item For the unpreconditioned cases, the condition number of the Hessian increases gradually at first with $\beta$ then quickly diverges to infinity as $\beta \to 1$. This transition is predicted by the upper bounds.  
		\item For the preconditioned cases with CVT, a general trend is that the condition number reduces at first and then increases as $\beta$ increases. There is an optimal $\beta$ at which the condition number is at its minimum. The upper bound predicts the optimal $\beta$ effectively. 
		\item The preconditioning with CVT improves the conditioning drastically and eliminates the divergence of the condition number of Hessian at $\beta = 1$.
		\item For  preconditioned  cases, the upper bound changes in the same direction as the condition number of the Hessian with respect to changes in $L_{0},L_{ens},\sigma_{B_{0}},\sigma_{P_{f}}$ and $\sigma_{R_{0}}$. In unpreconditioned cases, we have similar conclusion for $L_{0},\sigma_{B_{0}}$ and $\sigma_{P_{f}}$.		
		
        \end{itemize}
	
	In preconditioned cases, we find that the upper bound reveals the trend of the condition number with respect to changing parameters such as the correlation length scale and the variance. However, the theories in section 3 cannot explain the impact of the four different choices of $H_{0}$ that we presented in this paper. Furthermore, the bounds do not change with the number of observations $p$ and the sample size $m$, although $p$ does directly influence the actual condition number. 
	
	Meanwhile, the tests show that these theories do provide useful predictions on the impact of the balancing of variances and correlation length scales for the preconditioned cases. In unpreconditioned cases, the upper bound can predict well the influence of $L_{0}$ and $\sigma_{P_{f}}$. The bounds can inform the impact of these components on the convergence of iterative numerical algorithms.
	
	{It is well-known that the condition number of the Hessian matrix is a useful proxy to study the convergence speed of the least-squares minimisation of Hybrid 4D-Var. The results presented in this paper could then inform applications in terms of the restriction of the weight of the ensemble part (for the unpreconditioned cases), such that extreme ill-conditioning can be avoided. For the preconditioned Hybrid 4D-Var with CVT, we established a theory that effectively predicts an optimal weight of the ensemble part such that the conditioning is optimal.  }
	
	\section*{Acknowledgements}
	This work is funded by the UK Engineering and Physical Sciences Research Council, grant number EP/V061828/1, and in part by the NERC National Centre for Earth Observation.

 \bibliographystyle{unsrt}
 \bibliography{main} 

\end{document}